\titleformat{\subsection}[runin]
{\bfseries} {\thesubsection{.}}{0.15cm}{}[.]
\titleformat{\subsubsection}[runin]
{\em}{\thesubsubsection{.}}{0.15cm}{}[.]
\newtheorem{theorem}{Theorem}[section]
\newtheorem{proposition}[theorem]{Proposition}
\newtheorem{lemma}[theorem]{Lemma}
\newtheorem{corollary}[theorem]{Corollary}
\theoremstyle{definition}
\newtheorem{definition}[theorem]{Definition}
\newtheorem{remark}[theorem]{Remark}
\newtheorem{problem}[theorem]{Problem}
\newtheorem{example}[theorem]{Example}
\numberwithin{equation}{section}
\numberwithin{figure}{section}
\newcommand\Ecal{\mathcal{E}}
\newcommand\Pcal{\mathcal{P}}
\newcommand\Rcal{\mathcal{R}}
\newcommand\Cscr{\mathscr{C}}
\newcommand\Oscr{\mathscr{O}}
\newcommand\C{\mathbb{C}}
\newcommand\D{\overline{\mathbb D}}
\renewcommand\D{\mathbb D}
\newcommand\N{\mathbb{N}}
\newcommand\R{\mathbb{R}}
\newcommand\Z{\mathbb{Z}}
\newcommand\igot{\mathfrak{i}}
\renewcommand\igot{\mathfrak{i}}
\newcommand\E{\mathrm{e}}
\renewcommand\imath{\igot}
\newcommand\hra{\hookrightarrow}
\newcommand\lra{\longrightarrow}
\newcommand\longhookrightarrow{\ensuremath{\lhook\joinrel\relbar\joinrel\rightarrow}}
\newcommand\wt{\widetilde}
\newcommand\wh{\widehat}
\newcommand\di{\partial}
\newcommand\dibar{\overline\partial}
\newcommand\Id{\mathrm{Id}}
\newcommand\Contf{\mathrm{Cont}_{\mathrm{for}}}
\newcommand\Conth{\mathrm{Cont}_{\mathrm{hol}}}
\newcommand\AH{\mathrm{AH}}
\newcommand\AC{\mathrm{AC}}
\numberwithin{equation}{section}
\begin{document}

\fancyhead[LO]{H-principle for complex contact structures on Stein manifolds}
\fancyhead[RE]{F.\ Forstneri{\v c}} 
\fancyhead[RO,LE]{\thepage}

\thispagestyle{empty}

\vspace*{1cm}
\begin{center}
{\bf\LARGE H-principle for complex contact structures \\ on Stein manifolds}

\vspace*{0.5cm}

{\large\bf  Franc Forstneri{\v c}} 
\end{center}

\vspace*{1cm}

\begin{quote}
{\small
\noindent {\bf Abstract}\hspace*{0.1cm}
In this paper we introduce the notion of a formal complex contact structure on an odd
dimensional complex manifold. Our main result is that every formal complex contact structure on a 
Stein manifold $X$ is homotopic to a holomorphic contact structure on a Stein domain 
$\Omega\subset X$ which is diffeotopic to $X$. 
We also prove a parametric h-principle in this setting, analogous to Gromov's h-principle 
for contact structures on smooth open manifolds. 
On Stein threefolds we obtain a complete homotopy classification of 
formal complex contact structures. Our method furnishes a parametric 
h-principle for germs of holomorphic contact structures along
totally real submanifolds of class $\Cscr^2$ in any complex manifold.

\vspace*{0.2cm}

\noindent{\bf Keywords}\hspace*{0.1cm}  Stein manifold, complex contact structure, h-principle

\vspace*{0.1cm}

\noindent{\bf MSC (2010)}\hspace*{0.1cm} 37J55; 53D10, 32E10, 32E30}

\end{quote}

\section{Introduction}
A {\em complex contact manifold} is a pair $(X,\xi)$, where $X$ is a complex manifold of 
(necessarily) odd dimension $2n+1\ge 3$ and $\xi$ is a completely nonintegrable
holomorphic hyperplane subbundle (a {\em contact subbundle}) of the holomorphic 
tangent bundle $TX$, meaning that the O'Neill tensor
$\xi\times \xi \to TX/\xi$, $(v,w)\mapsto [v,w] \!\!\mod \xi$, is nondegenerate. 
Note  that $\xi=\ker\alpha$ where $\alpha$  
is a holomorphic $1$-form on $X$ with values in the holomorphic line bundle $L=TX/\xi$ 
(the {\em normal bundle} of $\xi$) which realises the quotient projection
\begin{equation}\label{eq:alpha} 
	0 \lra \xi \longhookrightarrow TX \stackrel{\alpha}{\lra} L \lra 0.
\end{equation}
Thus, $\alpha$ is a holomorphic section of the twisted cotangent bundle $T^*X\otimes L$.
The contact condition is equivalent to $\alpha\wedge (d\alpha)^n\ne 0$ at every point of $X$.
A theorem of Darboux \cite{Darboux1882CRAS} says that $\xi$ is locally at any point
holomorphically contactomorphic to the standard contact bundle 
$\xi_{\mathrm{std}}=\ker \alpha_{\mathrm{std}}$ on $\C^{2n+1}$ given by the $1$-form 
$\alpha_{\mathrm{std}} = dz+\sum_{j=1}^n x_j dy_j$, 
where $(x,y,z)$ are complex coordinates on $\C^{2n+1}$.
(See also \cite{Moser1965TAMS} or \cite[p.~67]{Geiges2008} for the real case
and \cite[Theorem A.2]{AlarconForstnericLopez2017CM} for the holomorphic case.)

We denote by $\Conth(X)$ the space of all holomorphic contact forms on $X$
endowed with the compact-open topology. 

In this paper we study the existence and homotopy classification of complex contact structures 
on Stein manifolds of any dimension $2n+1\ge 3$. 
We begin by recalling a few general observations due to LeBrun and Salamon 
\cite{LeBrun1995IJM,LeBrunSalamon1994IM} which pertain to an arbitrary complex
contact manifold. 

If $\alpha\in \Conth(X)$ and $L=TX/\ker\alpha$, 
then $\omega=\alpha\wedge (d\alpha)^n$ is a holomorphic $(2n+1)$-form on $X$ 
with values in the line bundle $L^{n+1} = L^{\otimes (n+1)}$, i.e., an element of 
$H^0(X,K_X\otimes L^{n+1})$ where $K_X=\Lambda^{2n+1}T^*X$ is the canonical bundle 
of $X$. Being nowhere vanishing, $\omega$ defines a holomorphic trivialisation of the line bundle 
$K_X\otimes L^{n+1}$, so we conclude that 
\begin{equation}\label{eq:KX}
	K_X^{-1} = K^*_X\cong L^{n+1}.
\end{equation}
Similarly, $(d\alpha)^n|_\xi$ is a nowhere vanishing section of the line bundle
$(\Lambda^{2n} \xi)^*\otimes L^{n}$ (i.e., $d\alpha|_\xi$ is an $L$-valued 
complex symplectic form on the bundle $\xi$), so we have that
\begin{equation}\label{eq:Lambda}
	\Lambda^{2n} \xi \cong L^{n} = (TX/\xi)^{n}.
\end{equation} 
In particular, on a contact $3$-fold we have $\Lambda^2\xi \cong TX/\xi$.
It is easily seen that conditions \eqref{eq:KX} and \eqref{eq:Lambda} are equivalent
to each other. These facts impose strong restrictions on the existence of complex contact 
structures, especially on compact manifolds. In particular, 
if $X$ is compact and simply connected, it carries at most one complex
contact structure up to isotopy (see \cite[Proposition 2.2]{LeBrunSalamon1994IM}).
For further results and references we refer to the survey by Beauville \cite{Beauville2011} 
and the introduction to the paper \cite{AlarconForstneric2017IMRN} by Alarc\'on and the author.

Assume now that $X$ is a Stein manifold of dimension $2n+1\ge 3$. 
For a generic holomorphic $1$-form
$\alpha$ on $X$, the equation $\alpha\wedge (d\alpha)^n = 0$ defines a (possibly empty)
complex hypersurface $\Sigma_\alpha\subset X$, and $\alpha$ is a contact form 
on the Stein manifold $X\setminus \Sigma_\alpha$.
This observation shows that there exist a plethora of Stein contact manifolds,
but does not answer the question whether a given Stein manifold (or a  given
diffeomorphism class of Stein manifolds) admits a contact structure. More precisely, 
{\em when is a complex hyperplane subbundle $\xi\subset TX$ satisfying 
\eqref{eq:Lambda} homotopic to a holomorphic contact subbundle?}

The following notion is motivated by Gromov's h-principle for real contact structures on smooth open 
manifolds (see \cite{Gromov1969} or \cite[10.3.2]{EliashbergMishachev2002}).

%
%
\begin{definition}[Formal complex contact structure]\label{def:formal}
Let $X$ be a complex manifold of dimension $2n+1\ge 3$. 
A  {\em formal complex contact structure} on $X$ is a  pair $(\alpha,\beta)$, 
where $\alpha$ is a smooth $(1,0)$-form on $X$ with values in a complex line
bundle $L\to X$ satisfying  \eqref{eq:KX}, 
$\beta$ is a smooth $(2,0)$-form on $\xi=\ker\alpha$ with values in $L$, and 
\begin{equation} \label{eq:formal}
	\alpha\wedge \beta^n =
	 \alpha \wedge \overbrace{\beta\wedge\cdots\wedge\beta}^n \ne 0
	\quad\text{holds at every point of $X$}.
\end{equation} 
\end{definition}

Note that $\alpha$ is a nowhere vanishing section of the complex 
vector bundle $T^*X\otimes L$ of rank $\dim X$; such always exists if $X$ is a Stein manifold.
A $(2,0)$-form $\beta$ satisfying \eqref{eq:formal} is an $L$-valued complex symplectic form 
on the complex $2n$-plane bundle $\xi=\ker\alpha \subset TX$,
and $\alpha\wedge \beta^n$ is a topological trivialisation of $K_X\otimes L^{n+1}$. 
On a Stein manifold, every complex vector bundle carries  a unique structure of
a holomorphic vector bundle up to isomorphisms according to the Oka-Grauert principle
(see \cite[Theorem 5.3.1]{Forstneric2017E}).

We denote by $\Contf(X)$ the space of all formal complex contact structures on $X$
endowed with the $\Cscr^\infty$ compact-open topology. We have the natural inclusion
\begin{equation}\label{eq:Cont}
	\Conth(X) \, \longhookrightarrow \, \Contf(X),\qquad 
	\alpha\mapsto (\alpha,d\alpha|_{\ker\alpha}).
\end{equation}

The following is our first main result; it is proved in Sect.\ \ref{sec:proofs}. 

%
%
\begin{theorem}\label{th:basic}
Let $X$ be a Stein manifold. 
Given $(\alpha_0,\beta_0)\in \Contf(X)$, there are a Stein domain $\Omega\subset X$
diffeotopic to $X$ and a homotopy $(\alpha_t,\beta_t) \in  \Contf(X)$ 
$(t\in [0,1])$ such that $\alpha_1|_\Omega\in\Conth(\Omega)$ and 
$\beta_1|_{\ker\alpha_1}=d\alpha_1|_{\ker\alpha_1}$ on $\Omega$. Furthermore, if  
$\alpha_0,\alpha_1\in\Conth(X)$ are connected by a path in $\Contf(X)$, they are 
also connected by a path of holomorphic contact forms on some Stein domain 
$\Omega\subset X$ diffeotopic to $X$.
\end{theorem}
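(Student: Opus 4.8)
The plan is to follow the usual route for h-principles of Oka type: deform $(\alpha_0,\beta_0)$ to a holomorphic contact structure by working over a handle decomposition of $X$, using at each stage a \emph{local} parametric h-principle for germs of holomorphic contact forms along totally real submanifolds of class $\Cscr^2$, and globalizing by the parametric Oka principle together with Runge-type approximation. Recall that a Stein manifold $X$ of complex dimension $m=2n+1\geq 3$ carries a strictly plurisubharmonic Morse exhaustion $\rho\colon X\to\R$ all of whose critical points have Morse index $\leq m$, and, by Eliashberg's theory of Stein handlebodies, one may arrange that each sublevel set $X_{j+1}=\{\rho\leq c_{j+1}\}$ is obtained from $X_j=\{\rho\leq c_j\}$, up to isotopy through Stein domains, by attaching a handle along an isotropic sphere whose core $D_j$ is totally real of class $\Cscr^2$, with $X_j\cup D_j$ admitting a basis of Stein neighborhoods. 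Consequently $X$ deformation retracts onto a (possibly noncompact) totally real $\Cscr^2$ skeleton $K$, and any sufficiently small Stein neighborhood $\Omega$ of $K$ in $X$ is diffeotopic to $X$. It therefore suffices to homotope $(\alpha_0,\beta_0)$ within $\Contf(X)$ to a pair that is holomorphic contact on such a neighborhood $\Omega$, with $\beta_1=d\alpha_1$ on $\ker\alpha_1$ there.

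The technical core, which I would isolate as a separate lemma, is a germ statement: if $M\subset X$ is a totally real submanifold of class $\Cscr^2$ and $(\alpha,\beta)$ is a formal complex contact structure on a neighborhood of $M$, then $(\alpha,\beta)$ is homotopic — through formal complex contact structures near $M$, and keeping it fixed on a closed subset where it is already holomorphic contact — to a pair $(\tilde\alpha,\tilde\beta)$ with $\tilde\alpha$ holomorphic near $M$ and $\tilde\beta=d\tilde\alpha|_{\ker\tilde\alpha}$; moreover this holds with continuous dependence on a compact family of parameters. Its proof has two independent pillars. First, near a totally real $\Cscr^2$ submanifold every smooth function is the $\Cscr^1$-limit of holomorphic functions (H\"ormander--Wermer), so the $(1,0)$-form $\alpha$ — a section of the holomorphic bundle $T^*X\otimes L$, where $L$ is fixed up to isomorphism by $K_X^{-1}\cong L^{n+1}$ as in \eqref{eq:KX} — together with the homotopy data, may be approximated by holomorphic sections on a smaller neighborhood of $M$; this settles the topological/holomorphic content and is precisely where the Oka principle for sections of the associated bundle over the Stein manifold is used to match homotopy classes. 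Second, the contact condition $\alpha\wedge(d\alpha)^n\neq 0$ is an \emph{open and ample} differential relation in the $1$-jet of $\alpha$, so Gromov's convex integration — as in his h-principle for contact structures, \cite[10.3.2]{EliashbergMishachev2002} — supplies, inside the prescribed formal homotopy class, a correction making the holomorphic approximant genuinely contact; the delicate point is to run the convex integration so that the oscillatory correction remains holomorphic near $M$ and $\Cscr^1$-small, which near a totally real set is arranged by realising the oscillations through holomorphic approximants concentrated near $M$.

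Granting this local h-principle, one proceeds by induction over the exhaustion. Suppose $(\alpha,\beta)$ has already been homotoped so as to be holomorphic contact on a neighborhood of $X_j$. Crossing the next critical level, $X_j\cup D_j$ is — after a $\Cscr^2$-small isotopy — totally real of class $\Cscr^2$ near $D_j$; applying the local h-principle with $M$ a neighborhood of $D_j$, rel a neighborhood of $X_j$, produces a homotopy making $(\alpha,\beta)$ holomorphic contact on a thin Stein neighborhood $N$ of $X_j\cup D_j$. Since $X_{j+1}$ deformation retracts onto $X_j\cup D_j$, extending the holomorphic contact structure from $N$ to a Stein neighborhood of $X_{j+1}$ slightly larger than $X_j\cup D_j$ is an extension problem for sections of an open holomorphic differential relation over a Stein pair, solved by the parametric Oka principle; Runge approximation ensures earlier stages are not disturbed, and a diagonal argument over $j$ assembles the deformations into a homotopy $(\alpha_t,\beta_t)\in\Contf(X)$ with $(\alpha_1,\beta_1)$ holomorphic contact on a Stein domain $\Omega$ which, being a thin neighborhood of the skeleton $K$, is diffeotopic to $X$, and with $\beta_1|_{\ker\alpha_1}=d\alpha_1|_{\ker\alpha_1}$ there. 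For the \emph{furthermore}, one runs the identical scheme with parameter space $[0,1]$: the given path in $\Contf(X)$ is a $1$-parameter formal family, and since $\alpha_0,\alpha_1\in\Conth(X)$ one keeps the structure fixed (equal to $(\alpha_s,d\alpha_s|_{\ker\alpha_s})$) over the endpoints $s\in\{0,1\}$; the parametric conclusion is then a path of holomorphic contact forms on a Stein domain $\Omega$ diffeotopic to $X$.

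The main obstacle is the local h-principle along totally real submanifolds. One must run Gromov's convex integration — which inherently introduces rapid oscillations and, a priori, controls only the $\Cscr^0$-size of the corrections — while remaining in the holomorphic (germ) category near $M$, and nevertheless achieve the \emph{open} contact condition rather than merely a formal solution. Reconciling the smooth convex-integration machinery with holomorphic approximation near a totally real set, and making the whole construction parametric and relative so that it can be iterated along the handles of a Stein exhaustion, is the step that requires the most care.
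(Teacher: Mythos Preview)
Your overall architecture---Morse exhaustion, local h-principle along totally real handle cores, induction---matches the paper. But there are two genuine problems in how you execute it.

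\textbf{Order of operations in the local h-principle.} You propose to approximate $\alpha$ holomorphically first and then run convex integration while keeping the result holomorphic and $\Cscr^1$-small. Convex integration gives only $\Cscr^0$ control and destroys holomorphicity, so this is exactly the obstacle you flag at the end, and it is self-inflicted. The paper does it the other way around. One views a $(1,0)$-form on a neighbourhood of the totally real disc $M\subset\R^{2n+1}\subset\C^{2n+1}$ as a $\C^{2n+1}$-valued map on a real domain; the complex contact relation then becomes a first-order open relation which is \emph{ample in the coordinate directions}, because fixing all partial derivatives but one column leaves the defining function affine in that column, so the bad set is a complex affine hyperplane in $\C^{2n+1}$ (hence its complement is connected with full convex hull). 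Gromov's convex integration therefore applies \emph{in the smooth category on $M$}, producing an ``almost contact'' form (asymptotically holomorphic along $M$ and satisfying $\alpha\wedge(d\alpha)^n\neq 0$ there). Only then does one approximate in $\Cscr^1$ along $M$ by a genuinely holomorphic $1$-form; since the contact condition is $\Cscr^1$-open, the approximant is holomorphic contact near $M$. No holomorphic convex integration is needed. Note also that your appeal to Gromov's h-principle for \emph{real} contact structures is misleading: the real contact relation is \emph{not} ample (this is why that h-principle needs openness of the manifold); ampleness here is a genuinely complex phenomenon.

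\textbf{The noncritical extension step.} You write that extending the holomorphic contact form from a thin neighbourhood $N$ of $X_j\cup D_j$ to a neighbourhood of $X_{j+1}$ is ``solved by the parametric Oka principle'' with ``Runge approximation''. This is precisely the open Problem~\ref{pr:Runge}: approximating a holomorphic contact form on a convex set by global ones. If it were available you would get a contact structure on all of $X$ (this is Theorem~\ref{th:ifthen}), not merely on a subdomain. The paper never makes this step. Instead, the induction is carried out over a nested sequence of thin strongly pseudoconvex domains $W_j$ with $W_{j-1}$ holomorphically convex in $W_j$, where $W_j$ is a Stein handlebody around $\overline{W_{j-1}}\cup M_j$---diffeomorphic to $X_j$ but in general strictly smaller. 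One simultaneously builds diffeomorphisms $h_j\colon X\to X$ with $h_j(X_j)=W_j$, fixed outside $X_{j+1}$; the limit gives the diffeotopy $X\cong\Omega=\bigcup_j W_j$. So drop the Oka/Runge step and phrase the induction over the $W_j$'s rather than the sublevel sets $X_j$.
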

 
A domain $\Omega\subset X$ is said to be {\em diffeotopic} to $X$ if there is a smooth family
of diffeomorphisms $h_t:X \stackrel{\cong}{\lra} h_t(X)\subset X$ $(t\in [0,1])$ such that 
$h_0=\Id_X$ and $h_1(X)=\Omega$. If $J$ denotes the almost complex structure operator on $X$, 
then $J_t=h_t^*(J)$ is a homotopy of complex structures on $X$ with $J_0=J$ and 
$J_1=h_1^*(J|_{\Omega})$. 
By Cieliebak and Eliashberg \cite[Theorem 8.43 and Remark 8.44]{CieliebakEliashberg2012}
the domain $\Omega$, and the diffeotopy $\{h_t\}_{t\in [0,1]}$ in Theorem \ref{th:basic} can be 
chosen such that  for every $t\in[0,1]$ the domain $h_t(X) \subset X$ is Stein;
equivalently, the manifold $(X,J_t)$ with $J_t=h_t^*(J)$ is Stein.

%
%
\begin{remark}
In our definition of a formal complex contact structure we may (and often do)
consider $(2,0)$-forms $\beta$ defined on $TX$, and not merely on the subbundle 
$\ker\alpha$; however, only the restriction $\beta|_{\ker\alpha}$ contributes to the product
$\alpha\wedge\beta$. On the other hand, the differential
of a holomorphic $L$-valued $1$-form $\alpha$ on $X$ is not an 
$L$-valued $2$-form on $X$ in general if $L$ is nontrivial, the reason being that 
for any holomorphic function $f$ we have that $d(f\alpha) = fd\alpha + df\wedge\alpha$. 
This shows however that the restriction
$d\alpha|_{\ker\alpha}$ is a well defined $L$-valued $2$-form on the subbundle $\ker\alpha$,
and $\alpha\wedge (d\alpha)^k$ is an $L^{k+1}$-valued form on $X$ for any $k\in\N$.
When writing $d\alpha$ for a holomorphic $1$-form $\alpha$ with values
in a nontrivial line bundle $L\to X$, we shall always mean $d\alpha|_{\ker\alpha}$,
and the equation $\beta=d\alpha$ will be understood to hold modulo $\alpha$
(i.e., on the subbundle $\ker\alpha$). 
\end{remark}

We also prove a parametric version of Theorem \ref{th:basic} (see Theorem \ref{th:parametric}) 
which says that a continuous compact family of formal complex contact structures on $X$ can be 
deformed to a continuous family of holomorphic contact structures on a Stein domain 
$\Omega\subset X$ diffeotopic to $X$, and  the deformation may be kept fixed
for those values of the parameter for which the given formal structure is 
already a holomorphic contact structure. 

For real contact structures, Gromov's h-principle 
\cite{Gromov1969} says that the inclusion \eqref{eq:Cont} 
of the space of smooth contact forms into the space of formal contact forms 
is a weak homotopy equivalence on any smooth open manifold;
in particular, every formal contact structure is homotopic to an honest contact structure.
(See also Eliashberg and Mishachev \cite[Sect. 10.3]{EliashbergMishachev2002}.) 
The situation is more complicated for closed manifolds as was discovered later 
by Bennequin \cite{Bennequin1983AST}
and Eliashberg \cite{Eliashberg1989IM,Eliashberg1993IMRN}. In particular, the h-principle 
for real contact structures fails on the $3$-sphere, but it holds for the 
class of overtwisted contact structures on any compact orientable  $3$-manifold;
see \cite[Theorem 1.6.1]{Eliashberg1989IM}. This was extended to manifolds of 
dimensions $\ge 5$ by Borman, Eliashberg, and Murphy in 2015; see
\cite{BormanEliashbergMurphy2015}.

Our results in the present paper seem to be the first analogues  in the holomorphic category
of the above mentioned Gromov's h-principle.
At this time we are unable to construct holomorphic contact forms on the whole 
Stein manifold under consideration. The main, and seemingly highly 
nontrivial problem arising in the proof, is the following.
(The analogous approximation problem for integrable holomorphic subbundles  
--- holomorphic foliations --- is also open in general; see \cite[Problem 9.16.8]{Forstneric2017E}.)

%
%
\begin{problem}\label{pr:Runge}
Given a holomorphic contact form $\alpha$ on an open neighbourhood of 
a compact convex set $K\subset \C^{2n+1}$, can we approximate $\alpha$ 
uniformly on $K$ by holomorphic contact forms defined on $\C^{2n+1}$? 
Is such approximation also possible for any continuous family of holomorphic contact forms 
$\alpha_p$ with parameter $p\in P$ in a compact Hausdorff space?
\end{problem}

This issue does not appear in the smooth case since one can 
pull back a contact structure on a neighbourhood $U$ of a compact convex set 
$K\subset \R^{2n+1}$ to a contact structure on 
$\R^{2n+1}$ by a diffeomorphism $\R^{2n+1}\to U$ which equals the identity near $K$.
%
%

%
%
\begin{theorem}\label{th:ifthen}
If Problem \ref{pr:Runge} has an affirmative answer, then every formal complex contact
structure on a Stein manifold $X$ is homotopic to a holomorphic contact
structure on $X$. Furthermore, if the parametric version of Problem \ref{pr:Runge} 
has an affirmative answer, then the inclusion \eqref{eq:Cont} is a weak homotopy equivalence.
\end{theorem}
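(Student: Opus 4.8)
The statement is conditional: assuming an affirmative (parametric) answer to Problem~\ref{pr:Runge}, upgrade Theorem~\ref{th:basic} (and its parametric companion Theorem~\ref{th:parametric}) from "holomorphic contact structure on a Stein domain $\Omega\subset X$ diffeotopic to $X$" to "holomorphic contact structure on all of $X$" (resp.\ to weak homotopy equivalence of the inclusion~\eqref{eq:Cont}). The natural route is therefore: take the conclusion of Theorem~\ref{th:basic}, which produces $\Omega\subset X$ Stein and diffeotopic to $X$ together with $\alpha_1\in\Conth(\Omega)$; then run an Oka--Weil/Mergelyan-type exhaustion argument on $X$ itself, using the affirmative answer to Problem~\ref{pr:Runge} as the inductive approximation step, to push the holomorphic contact form from $\Omega$ (or from a Stein neighbourhood basis of handlebodies inside $\Omega$) out to $X$, while controlling the homotopy class throughout. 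One should inspect the proof of Theorem~\ref{th:basic} in Sect.~\ref{sec:proofs}: presumably the domain $\Omega$ arises because the gluing/approximation is only carried out over a handlebody exhausting $X$ up to a diffeotopy, and the sole missing ingredient to do it over $X$ verbatim is precisely a Runge-type approximation of holomorphic contact forms across the critical points of a strictly plurisubharmonic exhaustion function --- i.e.\ across the attaching of handles, which locally reduces to approximation on a compact convex set $K\subset\C^{2n+1}$ in a suitable chart.

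\medskip
\noindent\textbf{Key steps, in order.} First, fix a strictly plurisubharmonic Morse exhaustion $\rho:X\to\R$ and the associated exhaustion by sublevel sets $X_c=\{\rho<c\}$; by the standard Stein handlebody theory (Cieliebak--Eliashberg \cite{CieliebakEliashberg2012}, Forstneri\v c \cite{Forstneric2017E}) $X$ is built from such pieces by attaching handles of index $\le\dim_\C X=2n+1$, and passing a regular level of $\rho$ is, up to a small $\Cscr^\infty$ deformation, either a holomorphically convex (Oka--Weil) extension or a handle attachment whose relevant local model sits in a convex chart. Second, start the induction with $\alpha$ holomorphic and contact on a neighbourhood of the core of $X_{c_0}$ for small $c_0$, obtained from the localised/relative form of Theorem~\ref{th:basic} near a point. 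Third, the inductive step: given a holomorphic contact form on a neighbourhood of $\overline{X_{c}}$ (more precisely on a Stein compact that is a deformation retract), use the affirmative answer to Problem~\ref{pr:Runge} to approximate it, uniformly on that compact, by a holomorphic contact form defined past the next critical level --- at a noncritical level this is the convex (polynomially convex in charts) approximation, and at a critical level one first extends across the attached handle using the smooth-to-holomorphic flexibility already in Theorem~\ref{th:basic}'s toolbox (totally real extension along the core of the handle, which by the last sentence of the abstract satisfies a parametric h-principle), then applies Problem~\ref{pr:Runge} on a convex neighbourhood of the handle. Fourth, the approximations are taken rapidly convergent (geometric rate on an exhaustion of $X$ by the compacts), so the limit $\alpha_\infty=\lim\alpha_k$ exists, is holomorphic on $X$, and is contact on $X$ because the nonvanishing of $\alpha\wedge(d\alpha)^n$ is an open condition stable under uniform-on-compacts convergence of holomorphic forms together with their derivatives. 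Fifth, track the homotopy: each approximation step is $\Cscr^0$-small, hence can be interpolated by a short path in $\Contf(X)$ staying inside the contact locus, and concatenating these with the initial homotopy from Theorem~\ref{th:basic} exhibits $(\alpha_0,\beta_0)$ as homotopic in $\Contf(X)$ to $(\alpha_\infty,d\alpha_\infty|_{\ker\alpha_\infty})\in\Conth(X)$.

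\medskip
\noindent\textbf{The parametric statement.} For the second assertion --- that \eqref{eq:Cont} is a weak homotopy equivalence --- run the same argument with a compact parameter space $P$ (and its boundary $\di P$, or a compact pair $(P,Q)$) throughout, invoking the parametric half of Problem~\ref{pr:Runge} at each inductive step and the already-available parametric Theorem~\ref{th:parametric} to get started and to extend across handles in families, keeping the construction fixed over the subset of $P$ where the datum is already in $\Conth(X)$. Surjectivity of $\pi_k$ comes from applying the absolute case to a family over $S^k$; injectivity from the relative case over $(D^{k+1},S^k)$. As usual one must check that the standard "parametric Oka--Weil with a rate" machinery applies: the approximating families depend continuously on $p\in P$, the convergence is uniform in $p$ on each compact, and the interpolating homotopies can be chosen continuously in $p$ --- all of which is routine given a parametric affirmative answer to Problem~\ref{pr:Runge}.

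\medskip
\noindent\textbf{Main obstacle.} With Problem~\ref{pr:Runge} granted, the genuine content that remains is the handle-crossing step: when $\rho$ has a critical point of index $q$ with $1\le q\le 2n+1$, one must extend a given holomorphic contact form from a neighbourhood of $\overline{X_c}$ to a neighbourhood of $\overline{X_c}\cup(\text{handle})$, i.e.\ across the totally real core disc of the handle, \emph{as a holomorphic contact form}, before one is in position to invoke the convex approximation of Problem~\ref{pr:Runge}. This is exactly the "germ of holomorphic contact structure along a totally real submanifold of class $\Cscr^2$" h-principle advertised in the abstract and presumably established en route to Theorem~\ref{th:basic}; the work here is to splice that germ-level result with the approximation on the convex handle model and to verify that the contact (nonvanishing of $\alpha\wedge(d\alpha)^n$) and the topological-triviality bookkeeping survive the gluing uniformly, and --- for the parametric statement --- that all of this can be done continuously in the parameter. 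Everything else is the standard rapidly-convergent exhaustion scheme.
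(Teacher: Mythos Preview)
Your overall architecture --- Morse exhaustion, handle attachment across totally real core discs, then a Runge-type approximation to extend --- matches the paper's. But you have misidentified the main obstacle and, as a consequence, omitted the one genuinely new ingredient in the proof.

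The handle-crossing step you flag as the ``genuine content that remains'' is \emph{not} new here: it is exactly what Lemma~\ref{lem:handle} already provides in the proof of Theorem~\ref{th:basic}, and it does not use Problem~\ref{pr:Runge} at all. What is new, and what you gloss over, is the \emph{noncritical} extension: once you have a holomorphic contact form on a Stein handlebody $W_{j-1}\supset \overline X_{j-1}\cup M_j$, you must push it out to a neighbourhood of $\overline X_j$. The paper does this by writing $X_j$ as $W_{j-1}$ with finitely many convex bumps attached, so that at each step one has a Cartan pair $(A,B)$ with $B$ (and $A\cap B$) convex in a holomorphic chart. Problem~\ref{pr:Runge} is invoked \emph{only} here, to approximate $\alpha|_C$ by a holomorphic contact form $\beta$ on a neighbourhood of $B$. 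Your sentence ``approximate it, uniformly on that compact, by a holomorphic contact form defined past the next critical level'' skips the point: Problem~\ref{pr:Runge} gives you $\beta$ on $B$, not on $A\cup B$, and $\beta$ is merely close to $\alpha$ on $C=A\cap B$, not equal to it.

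This is where the real gap in your proposal lies. Having $\alpha$ on $A$ and $\beta$ on $B$ with $\beta$ close to $\alpha$ on $C$ does not yield a single holomorphic contact form on $A\cup B$; you need a gluing lemma. The paper supplies one (Lemma~\ref{lem:gluingcontact}): Gray's stability theorem gives a biholomorphism $\phi_1$ near $C$ with $\phi_1^*\beta=\lambda_1\alpha$; a splitting lemma for near-identity biholomorphisms on Cartan pairs writes $\phi_1\circ\phi_A=\phi_B$; and a multiplicative Cousin problem absorbs the scalar $\lambda_1\circ\phi_A$. This is nontrivial and is not subsumed by ``verify that the contact \ldots\ survives the gluing uniformly.'' Without it, your inductive step does not close. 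The same lemma, applied parametrically, is what drives the second assertion.
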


Theorem \ref{th:ifthen} is proved in Sect.\ \ref{sec:proofs}.

%
%
We now consider more carefully the case when $X$ is a Stein manifold with $\dim X=3$. 
Let $L$ be a holomorphic line bundle on $X$ satisfying \eqref{eq:KX}, i.e., such that $K_X\otimes L^2$
is a trivial line bundle. (By the Oka-Grauert principle, every complex vector bundle on a Stein manifold 
carries a compatible structure of a holomorphic vector bundle; 
see \cite[Theorem 5.3.1]{Forstneric2017E}.) Note that $T^*X\otimes L$
admits a nowhere vanishing holomorphic section $\alpha$, i.e., 
an $L$-valued holomorphic $1$-form on $X$ 
(see \cite[Corollary 8.3.2]{Forstneric2017E}). Let $\xi=\ker\alpha\subset TX$.  
Then, $K_X\cong \Lambda^2\xi^* \otimes (TX/\xi)^* \cong  \Lambda^2\xi^* \otimes L^*$.
Since  $K_X\cong (L^*)^2$ by the assumption, we infer that $\Lambda^2\xi^* \otimes L$ is a 
trivial bundle. A trivialisation of $\Lambda^2\xi^* \otimes L$
is a $2$-form $\beta$ on $\xi$ with values in $L$ such that $\omega = \alpha\wedge\beta$
is a trivialisation of $K_X\otimes L^2$, i.e., $(\alpha,\beta)\in \Contf(X)$. 
Hence, the necessary condition \eqref{eq:KX} for the existence of an $L$-valued formal complex
contact structure on $X$ is also sufficient when $X$ is a Stein manifold and $\dim X=3$.

We denote by $\Contf(X,L)$ the subset of $\Contf(X)$ consisting of pairs of $L$-valued 
forms $(\alpha,\beta)\in \Contf(X)$. Clearly,  $\Contf(X,L)$ is a union of connected
components of $\Contf(X)$. We claim that the connected components
of $\Contf(X,L)$ coincide with the homotopy classes of trivialisations
of $K_X\otimes L^2$. One direction is obvious: given a homotopy
$(\alpha_t,\beta_t)\in \Contf(X,L)$ with $t\in [0,1]$, the family 
$\alpha_t\wedge \beta_t$ is a homotopy of  trivialisations of $K_X\otimes L^2$.
Conversely,  assume that $(\alpha_0,\beta_0), (\alpha_1,\beta_1) \in \Conth(X,L)$
and there is a homotopy $\omega_t$  of trivialisations of $K_X\otimes L^2$
with $\omega_0=\alpha_0\wedge \beta_0$ and $\omega_1=\alpha_1\wedge \beta_1$.
Since $\dim X=3$ and $X$ is Stein, it is homotopy equivalent to a $3$-dimensional CW complex.  
A simple topological argument in the line of 
\cite[proof of Corollary 8.3.2]{Forstneric2017E}
then shows that $\alpha_0$ and $\alpha_1$ can be connected
by a homotopy $\alpha_t$ of nowhere vanishing sections of 
$T^*X\otimes L$. Let $\xi_t=\ker \alpha_t\subset TX$ for $t\in [0,1]$.
Then, $\omega_t=\alpha_t\wedge \tilde \beta_t$ where $\tilde \beta_t$
is a trivialisation of $\Lambda^2\xi_t^* \otimes L$ and $\tilde \beta_0=\beta_0$.
At $t=1$ we have $\omega_1=\alpha_1\wedge \beta_1=\alpha_1\wedge \tilde\beta_1$,
and it follows that $\tilde \beta_1|_{\xi_1}=\beta_1|_{\xi_1}$. This proves the claim.

Recall that the isomorphism classes of complex (or holomorphic)
line bundles on a Stein manifold $X$ are in bijective correspondence with the elements
of $H^2(X;\Z)$ by Oka's theorem (see \cite[Theorem 5.2.2]{Forstneric2017E}).
The above observations yield the following homotopy classification of 
formal complex contact structures on Stein threefolds. 

%
%
\begin{proposition}\label{prop:basic}
If $X$ is a Stein manifold of dimension $3$, then the connected components of 
the space $\Contf(X)$ of formal complex contact structures on $X$
are in one-to-one correspondence with the following pairs of data:
\begin{enumerate}[\rm (i)]
\item an isomorphism class of a complex line bundle $L$ on $X$ satisfying 
$L^{2} \cong (K_X)^{-1}$, i.e., an element $c\in H^2(X;\Z)$ with $2c=c_1(TX)$, and
\item a choice of a homotopy class of trivialisations of the line bundle $K_X\otimes L^2$,
that is, an element of $[X,\C^*]=[X,S^1] = H^1(X;\Z)$.
\end{enumerate}
In particular, if $H^1(X;\Z)=0$ and $H^2(X;\Z)=0$ then the space $\Contf(X)$ is connected;
this holds for $X=\C^3$.
\end{proposition}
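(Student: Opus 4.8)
\emph{Overview.} The statement is essentially a bookkeeping consequence of the three observations made in the paragraphs preceding it, so the plan is to organise them, indexing everything by the line bundle $L$ occurring in a formal structure. Since $L$ is part of the datum in Definition~\ref{def:formal} and its isomorphism type is locally constant along a homotopy, $\Contf(X)$ decomposes as the disjoint union of the subspaces $\Contf(X,L)$ over isomorphism classes of complex line bundles $L\to X$ with $K_X\otimes L^2$ a trivial complex line bundle, each $\Contf(X,L)$ being a union of connected components. Passing to first Chern classes in \eqref{eq:KX}, this last condition reads $2c_1(L)=c_1(TX)$; since by Oka's theorem \cite[Theorem~5.2.2]{Forstneric2017E} the isomorphism classes of complex line bundles on the Stein manifold $X$ biject with $H^2(X;\Z)$ via $c_1$, the admissible $L$ correspond precisely to the classes $c\in H^2(X;\Z)$ with $2c=c_1(TX)$, and the paragraph before the statement shows $\Contf(X,L)\ne\emptyset$ for each of them. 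This yields the datum (i), and it remains to count $\pi_0\bigl(\Contf(X,L)\bigr)$ for a fixed admissible $L$.

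\emph{Counting $\pi_0$ for fixed $L$.} Here I would prove the ``claim'' established above: the assignment $(\alpha,\beta)\mapsto\alpha\wedge\beta$ --- well defined on $\pi_0$, since a path $(\alpha_t,\beta_t)$ yields the path of trivialisations $\alpha_t\wedge\beta_t$ --- induces a bijection from $\pi_0\bigl(\Contf(X,L)\bigr)$ to the set of homotopy classes of nowhere vanishing sections of $K_X\otimes L^2$. Surjectivity: $T^*X\otimes L$ carries a nowhere vanishing section $\alpha$ (for instance by \cite[Corollary~8.3.2]{Forstneric2017E}), and wedging with $\alpha$ is a fibrewise isomorphism $\Lambda^2(\ker\alpha)^*\otimes L\cong K_X\otimes L^2$, so every trivialisation $\omega$ of the target equals $\alpha\wedge\beta$ for a unique trivialisation $\beta$ of the source. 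Injectivity: given $(\alpha_0,\beta_0),(\alpha_1,\beta_1)\in\Contf(X,L)$ with $\alpha_0\wedge\beta_0$ joined to $\alpha_1\wedge\beta_1$ by a path $\omega_t$ of trivialisations of $K_X\otimes L^2$, I use that $X$ is homotopy equivalent to a $3$-dimensional CW complex and that the complement of the zero section in the rank-$3$ bundle $T^*X\otimes L$ has fibre $\C^3\setminus\{0\}\simeq S^5$; by standard obstruction theory (using $\pi_i(S^5)=0$ for $i\le 4$ while $\dim X=3$) the space of nowhere vanishing sections of $T^*X\otimes L$ is connected, so I may fix a path $\alpha_t$ in it from $\alpha_0$ to $\alpha_1$, put $\xi_t=\ker\alpha_t$, and let $\tilde\beta_t$ be the unique trivialisation of $\Lambda^2\xi_t^*\otimes L$ with $\alpha_t\wedge\tilde\beta_t=\omega_t$; then $(\alpha_t,\tilde\beta_t)$ is a path in $\Contf(X,L)$ joining $(\alpha_0,\beta_0)$ to $(\alpha_1,\beta_1)$, because $\tilde\beta_0$ and $\tilde\beta_1$ agree with $\beta_0$ and $\beta_1$ respectively on $\ker\alpha_0$ and $\ker\alpha_1$, which is all that enters the wedge.

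\emph{Conclusion and main obstacle.} Two nowhere vanishing sections of the trivial complex line bundle $K_X\otimes L^2$ are homotopic exactly when the corresponding maps $X\to\C^*$ are homotopic, and $\C^*\simeq S^1=K(\Z,1)$ gives $[X,\C^*]=[X,S^1]=H^1(X;\Z)$, which is the datum (ii); combining the three steps produces the asserted one-to-one correspondence. For the last assertion, if $H^1(X;\Z)=H^2(X;\Z)=0$ then $c_1(TX)=0$ and the only admissible class is $c=0$, so $L\cong\Ocal_X$ is the unique admissible line bundle and there is a single homotopy class of trivialisations of $K_X\otimes L^2\cong\Ocal_X$; hence $\Contf(X)$ is connected, and since $\C^3$ is contractible this applies to $X=\C^3$. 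The one step with genuine content is the obstruction count in the middle paragraph --- that the nowhere vanishing sections of $T^*X\otimes L$ over the $3$-complex $X$ form a connected set --- which is the ``simple topological argument in the line of \cite[proof of Corollary~8.3.2]{Forstneric2017E}'' alluded to before the statement; everything else is formal.
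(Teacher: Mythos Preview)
Your proposal is correct and follows essentially the same approach as the paper's argument (given in the paragraphs immediately preceding the statement): decompose $\Contf(X)$ by the isomorphism class of $L$, then show $(\alpha,\beta)\mapsto\alpha\wedge\beta$ induces a bijection from $\pi_0(\Contf(X,L))$ to homotopy classes of trivialisations of $K_X\otimes L^2$ by using path-connectedness of the nowhere vanishing sections of $T^*X\otimes L$. The only difference is that you spell out the obstruction-theoretic reason for this connectedness (fibre $\C^3\setminus\{0\}\simeq S^5$, $\pi_i(S^5)=0$ for $i\le 4$, CW dimension $\le 3$) where the paper simply points to \cite[proof of Corollary~8.3.2]{Forstneric2017E}; this is a welcome clarification rather than a different method.
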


Theorem \ref{th:basic} and Proposition \ref{prop:basic} imply the following corollary.

%
%
\begin{corollary}\label{cor:basic}
Let $X$ be a Stein manifold of dimension $3$. 
Given a holomorphic line bundle $L$ on $X$ such that $(K_X)^{-1} \cong L^{2}$, there are a 
Stein domain $\Omega\subset X$ diffeotopic to $X$ and a holomorphic contact subbundle 
$\xi\subset T\Omega$ such that  $T\Omega/\xi$ is isomorphic to $L|_\Omega$.
Furthermore, given holomorphic $L$-valued contact forms $\alpha_0,\alpha_1$ on $X$
such that the map 
$\frac{\alpha_1\wedge d\alpha_1}{\alpha_0\wedge d\alpha_0}:X\to\C^*$ is null homotopic,
there are a Stein domain $\Omega\subset X$ as above and a homotopy $\alpha_t\in\Contf(X)$ 
$(t\in [0,1])$ connecting $\alpha_0|_\Omega$ to $\alpha_1|_\Omega$.
\end{corollary}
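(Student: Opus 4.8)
The plan is to deduce the Corollary directly from Theorem~\ref{th:basic}, Proposition~\ref{prop:basic}, and the discussion preceding the latter; no new analytic input is needed, only careful bookkeeping of the line-bundle data along the homotopies furnished by Theorem~\ref{th:basic}.

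For the first assertion I would argue as follows. By the observations recorded just before Proposition~\ref{prop:basic}, the hypothesis that $X$ is a Stein $3$-fold with $K_X\otimes L^2$ holomorphically trivial guarantees a nowhere vanishing holomorphic section $\alpha_0$ of $T^*X\otimes L$; writing $\xi_0=\ker\alpha_0$, the bundle $\Lambda^2\xi_0^*\otimes L$ is trivial, hence admits a nowhere vanishing holomorphic section $\beta_0$, an $L$-valued $2$-form on $\xi_0$ with $\alpha_0\wedge\beta_0$ a trivialisation of $K_X\otimes L^2$. Thus $(\alpha_0,\beta_0)\in\Contf(X,L)$, so this set is nonempty. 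Applying Theorem~\ref{th:basic} to $(\alpha_0,\beta_0)$ yields a Stein domain $\Omega\subset X$ diffeotopic to $X$ and a homotopy $(\alpha_t,\beta_t)\in\Contf(X)$ with $\alpha_1|_\Omega\in\Conth(\Omega)$ and $\beta_1|_{\ker\alpha_1}=d\alpha_1|_{\ker\alpha_1}$ on $\Omega$. Since $\Contf(X,L)$ is a union of connected components of $\Contf(X)$, the entire path stays in $\Contf(X,L)$; in particular $\alpha_1$ is a section of $T^*X\otimes L$, so it realises the quotient projection $TX\to TX/\ker\alpha_1\cong L$. Then $\xi:=\ker(\alpha_1|_\Omega)\subset T\Omega$ is a holomorphic contact subbundle with $T\Omega/\xi\cong L|_\Omega$, as desired.

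For the second assertion, with $n=1$ the products $\omega_i:=\alpha_i\wedge d\alpha_i$ $(i=0,1)$ are trivialisations of $K_X\otimes L^2$, and the hypothesis that $\omega_1/\omega_0:X\to\C^*$ is null homotopic means precisely that $\omega_0$ and $\omega_1$ are homotopic through such trivialisations. By the argument establishing the claim preceding Proposition~\ref{prop:basic} (that the connected components of $\Contf(X,L)$ are in bijection with the homotopy classes of trivialisations of $K_X\otimes L^2$), the formal structures $(\alpha_0,d\alpha_0|_{\ker\alpha_0})$ and $(\alpha_1,d\alpha_1|_{\ker\alpha_1})$ lie in the same connected component of $\Contf(X,L)\subset\Contf(X)$; via the inclusion \eqref{eq:Cont} this says $\alpha_0$ and $\alpha_1$ are connected by a path in $\Contf(X)$. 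Now the second part of Theorem~\ref{th:basic} applies and produces a path of holomorphic contact forms on some Stein domain $\Omega\subset X$ diffeotopic to $X$ connecting $\alpha_0|_\Omega$ to $\alpha_1|_\Omega$; composing with \eqref{eq:Cont} gives the required homotopy in $\Contf(X)$. This $\Omega$ is automatically ``as above'': since $\alpha_0$ is a global holomorphic $L$-valued contact form on $X$, its restriction to $\Omega$ has kernel a holomorphic contact subbundle $\xi$ of $T\Omega$ with $T\Omega/\xi\cong L|_\Omega$.

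The proof involves no genuine obstacle; the only points demanding attention are (a) verifying that the homotopies from Theorem~\ref{th:basic} do not leave the component $\Contf(X,L)$ --- automatic because $\Contf(X,L)$ is open and closed in $\Contf(X)$ --- which is what pins the normal bundle of the resulting contact subbundle to $L$ rather than to some other line bundle squaring to $K_X^{-1}$; and (b) making a single Stein domain $\Omega$ serve the purposes of both conclusions in the second assertion, which is immediate here because $\alpha_0$ is given on all of $X$.
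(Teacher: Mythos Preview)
Your proposal is correct and follows precisely the route the paper indicates: the paper gives no separate proof but simply states that the corollary is implied by Theorem~\ref{th:basic} and Proposition~\ref{prop:basic}, and you have filled in exactly those details, including the crucial observation that $\Contf(X,L)$ is a union of connected components of $\Contf(X)$ so the homotopy from Theorem~\ref{th:basic} preserves the normal line bundle.
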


Since we must pass to Stein subdomains of $X$ when constructing
contact structures and homotopies between them, the following problem remains open.

\begin{problem}\label{prob:connected}
Let $X$ be a Stein manifold of dimension $3$ with $H^1(X;\Z)=H^2(X;\Z)=0$.
Is the space $\Conth(X)$ connected? In particular, is $\Conth(\C^3)$ connected?
\end{problem}

\begin{remark}\label{rem:Cor1.6}
Corollary \ref{cor:basic} gives a homotopy classification of
contact forms on Stein $3$-folds, but not necessarily of contact bundles.   
A holomorphic contact bundle $\xi$ on $X$ is determined by a holomorphic 
$1$-form $\alpha$ up to a nonvanishing factor $f\in \Oscr(X,\C^*)$.
Since $f\alpha \wedge d(f\alpha)=f^2\alpha\wedge d\alpha$, this changes the trivialisation 
of $K_X\otimes L^2$ by $f^2$. (More generally, if $\dim X=2n+1$ then the trivialisation
of $K_X\otimes L^{n+1}$ given by $\alpha\wedge (d\alpha)^n$ changes by the factor $f^{n+1}$.)
%
%
%
Hence, a homotopy class of holomorphic contact bundles on a Stein $3$-fold $X$ 
is uniquely determined by a pair $(c,d)$, where $c\in H^2(X;\Z)$ satisfies $2c=c_1(TX)$ 
and $d\in H^1(X;\Z)/2H^1(X;\Z)$. By Corollary \ref{cor:basic} every such pair is represented
by a holomorphic contact bundle on a Stein domain $\Omega\subset X$ diffeotopic to $X$.
\qed\end{remark}

We do not have a comparatively good classification result for $\Contf(X)$ 
on Stein manifolds of dimension five or more. Granted the necessary conditions
\eqref{eq:KX}, \eqref{eq:Lambda} for the normal bundle $L$, 
the existence and classification of complex symplectic $L$-valued 2-forms 
$\beta$ on the $2n$-plane bundle $\xi=\ker\alpha$ amounts to the analogous problem 
for sections of an associated fibre bundle with the fibre $GL_{2n}(\C)/Sp_{2n}(\C)$. 
We do not pursue this issue here. 

One may wonder to what extent it is possible to control the choice of the domain $\Omega\subset X$
in Theorem \ref{th:basic} and Corollary \ref{cor:basic}. In our proof, $\Omega$ arises as
a thin Stein neighbourhood of an embedded CW complex in $X$ which represents 
its Morse complex, so it carries all the topology of $X$. However, since a Mergelyan-type approximation
theorem is used in the construction, we do not know how large $\Omega$ can be.
We describe the construction more precisely at the end of this introduction and
supply references. 

Our method actually gives much more.
Assume that $X$ is an odd dimensional complex manifold (not necessarily Stein) 
and $W\subset X$ is a tamely embedded CW complex 
of dimension at most $\dim X$. (A suitable notion of tameness 
was introduced by Gompf \cite{Gompf2005,Gompf2013}.) 
Let $(\alpha,\beta)$ be a formal contact structure on $X$. 
After a small topological adjustment of $W$ in $X$, there is
a holomorphic contact form $\wt \alpha\in\Conth(\Omega)$ on a Stein thickening
$\Omega\subset X$ of $W$ such that $(\wt\alpha, d\wt\alpha)$ 
is homotopic to $(\alpha,\beta)$ in $\Contf(\Omega)$. 

This is illustrated most clearly by 
looking at holomorphic contact structures on neighbourhoods of totally real submanifolds.
 A real submanifold $M$ of class $\Cscr^1$ in a complex manifold $X$ is said to be {\em totally real}
if the tangent space $T_x M$ at any point $x\in M$ (a real vector subspace of $T_x X$)
does not contain any complex line. By Grauert \cite{Grauert1958AM}, such $M$ admits
a basis of tubular Stein neighbourhoods in $X$, the {\em Grauert tubes}. 
Every smooth $n$-manifold $M$ is a totally real submanifold of a 
Stein $n$-manifold: take the compatible real analytic structure on $M$, let
$M^\C$ be its complexification, and choose $X$ to be a Grauert tube around $M$ in $M^\C$.
The following is the $1$-parametric h-principle for germs of complex contact structures along 
a totally real submanifold; see Theorem \ref{th:trparametric} for the parametric case.

%
%
\begin{theorem}\label{th:trbasic} 
Let $M$ be a totally real submanifold of class $\Cscr^2$ in a complex manifold $X$. Every formal 
complex contact structure $(\alpha_0,\beta_0)\in \Contf(X)$ is homotopic  in $\Contf(X)$ to a holomorphic 
contact form $\alpha$ on a tubular Stein neighbourhood of $M$ in $X$. 
Furthermore, any two holomorphic contact forms $\alpha_0,\alpha_1$ on a neighbourhood of $M$ 
which are formally homotopic along $M$ are homotopic through a family of holomorphic contact forms 
$\alpha_t\in\Conth(\Omega)$ $(t\in[0,1])$ on a Stein neighbourhood $\Omega\subset X$ of $M$.
\end{theorem}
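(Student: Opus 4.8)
The plan is to reduce the problem to a combination of three ingredients: (a) the fact that a totally real submanifold $M$ of class $\Cscr^2$ in a complex manifold $X$ admits arbitrarily thin tubular Stein neighbourhoods (Grauert tubes), together with the existence of holomorphic retractions of such neighbourhoods onto (the real-analytic model of) $M$ after a $\Cscr^2$-small perturbation; (b) a Mergelyan-type approximation theorem for holomorphic functions and sections on Grauert tubes of totally real submanifolds, which allows one to approximate a given smooth form near $M$, in the $\Cscr^1(M)$ norm, by a holomorphic form on a slightly smaller tube; and (c) the openness and stability of the contact condition $\alpha\wedge\beta^n\ne 0$ under small $\Cscr^1$ perturbations.

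First I would set up the formal data. Given $(\alpha_0,\beta_0)\in\Contf(X)$ with values in a line bundle $L$ satisfying \eqref{eq:KX}, by the Oka-Grauert principle $L$ may be taken holomorphic, and after shrinking to a Grauert tube $\Omega_0$ of $M$ we may trivialise $L|_{\Omega_0}$ holomorphically (a tube deformation retracts onto $M$, and $L$ pulled back to $M$ is topologically trivial after possibly passing to a thinner $M$-neighbourhood; here one uses $\dim M\le\dim X$ and a standard obstruction argument, exactly as in \cite[Corollary 8.3.2]{Forstneric2017E}). Then $\alpha_0$ becomes a $\C$-valued $(1,0)$-form near $M$ and $\beta_0$ a $\C$-valued $(2,0)$-form on $\ker\alpha_0$; extend $\beta_0$ to a $(2,0)$-form on $TX$ near $M$, which changes nothing modulo $\alpha_0$. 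Using the holomorphic retraction $\rho\colon\Omega\to M$ onto the real-analytic model of $M$, I would first homotope $(\alpha_0,\beta_0)$, through formal structures, to a pair whose coefficient functions are constant along the fibres of $\rho$ near $M$, i.e. pulled back from $M$; the homotopy is the obvious linear one $((1-t)\mathrm{Id}+t\rho^*)$ applied to coefficients, and it stays in $\Contf$ near $M$ because the contact condition is open and $\rho$ is close to the identity on a thin tube.

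Next comes the approximation step. On the resulting tube the coefficient functions of $\alpha_0$ and $\beta_0$ are (after the above) essentially functions on $M$; I would approximate them in $\Cscr^1(M)$ by restrictions of holomorphic functions on a slightly smaller Grauert tube, using the Mergelyan theorem for totally real submanifolds of class $\Cscr^2$ (e.g. the version in \cite[Theorem 3.8.1]{Forstneric2017E} or its parametric refinement). Call the resulting holomorphic $(1,0)$-form $\alpha_1$ and holomorphic $(2,0)$-form $\tilde\beta_1$. Because the approximation is $\Cscr^1$-close and the contact condition $\alpha\wedge\beta^n\ne 0$ is open in the $\Cscr^1$ topology, the linear path $\alpha_s=(1-s)\alpha_0+s\alpha_1$, $\beta_s=(1-s)\beta_0+s\tilde\beta_1$ lies in $\Contf(\Omega')$ for a sufficiently thin tube $\Omega'$. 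Finally, on $\Omega'$ set $\beta_1:=d\alpha_1|_{\ker\alpha_1}$: since $\tilde\beta_1$ is already holomorphic and $\alpha_1\wedge\tilde\beta_1^n\ne0$, both $\tilde\beta_1$ and $d\alpha_1|_{\ker\alpha_1}$ are $L$-valued complex symplectic forms on the holomorphic $2n$-plane bundle $\ker\alpha_1$, and after one further $\Cscr^1$-small holomorphic correction of $\alpha_1$ (solving a $\dibar$-problem, again on a thinner tube, to adjust $d\alpha_1$ — this is where the totally real hypothesis is essential, as $\dibar$ can be solved with estimates on Grauert tubes) we arrange $\alpha_1\wedge(d\alpha_1)^n\ne0$, i.e. $\alpha_1\in\Conth(\Omega')$. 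Concatenating the three homotopies (retraction homotopy, approximation homotopy, correction homotopy) yields the desired path in $\Contf(X)$ from $(\alpha_0,\beta_0)$ to $(\alpha_1,d\alpha_1)$. The relative statement for two already-holomorphic contact forms $\alpha_0,\alpha_1$ that are formally homotopic along $M$ follows by applying the same procedure to the given formal homotopy $(\alpha_t,\beta_t)$, treating it as a $1$-parameter family: the Mergelyan and $\dibar$ steps are carried out with parameter $t\in[0,1]$ (using their parametric versions), and one keeps the data fixed at $t=0,1$ since it is already holomorphic contact there, producing a path in $\Conth(\Omega)$.

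The main obstacle, and the step demanding the most care, is the final correction producing a genuine contact form: passing from "$\alpha_1$ with a holomorphic complex symplectic $\tilde\beta_1$ on $\ker\alpha_1$" to "$\alpha_1$ with $d\alpha_1|_{\ker\alpha_1}$ nondegenerate." In the smooth (real) h-principle this is handled by Gromov's convex integration / holonomic approximation, but here we must stay holomorphic, so instead I would argue that since $\tilde\beta_1$ and $d\alpha_1|_{\ker\alpha_1}$ are cohomologous in an appropriate sense near $M$ — both trivialise the same line bundle $\Lambda^{2n}\ker\alpha_1^*\otimes L^n$, which is holomorphically trivial on the tube — there is a holomorphic $1$-form $\gamma$ near $M$ with $\gamma\wedge$ $($lower order$)$ interpolating between them, and replacing $\alpha_1$ by $\alpha_1+\epsilon\gamma$ (solving $\dibar$ to keep things holomorphic and small) moves $d\alpha_1|_{\ker\alpha_1}$ toward $\tilde\beta_1$ while staying in the open contact locus. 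Making this precise — in particular checking that the Darboux-type normal form (cf. \cite[Theorem A.2]{AlarconForstnericLopez2017CM}) is available and that the $\dibar$-corrections can be done uniformly in the parameter without destroying the contact condition — is where the real work lies; everything else is a routine assembly of standard Stein-theoretic tools for totally real submanifolds.
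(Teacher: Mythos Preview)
Your proposal has a genuine gap precisely at the step you flag as ``where the real work lies.'' After Mergelyan approximation you obtain a holomorphic $1$-form $\alpha_1$ that is $\Cscr^1$-close to $\alpha_0$ on $M$, but this only tells you that $d\alpha_1$ is $\Cscr^0$-close to $d\alpha_0$ --- and $d\alpha_0|_{\ker\alpha_0}$ has, a priori, nothing whatsoever to do with $\beta_0$. That gap between $d\alpha_0$ and $\beta_0$ is exactly what distinguishes a formal contact structure from a genuine one, and no amount of $\dibar$-solving or small holomorphic perturbation $\alpha_1\mapsto\alpha_1+\epsilon\gamma$ will bridge it: the map $\alpha\mapsto d\alpha|_{\ker\alpha}$ is a first-order differential operator, so achieving $d\alpha$ close to a \emph{prescribed} $\tilde\beta_1$ is a holonomic approximation problem, not an elliptic one. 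Your claim that $\tilde\beta_1$ and $d\alpha_1|_{\ker\alpha_1}$ are ``cohomologous in an appropriate sense'' because they trivialise the same line bundle is not enough --- that is a pointwise, algebraic statement with no bearing on whether one can realise $\tilde\beta_1$ as the differential of something close to $\alpha_1$. (There is also a smaller issue: $L|_M$ need not be trivial, as the paper's own Example~\ref{ex:2sphere} shows, so your global trivialisation step fails in general.)

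The paper's route is different and avoids this obstacle by \emph{not} trying to stay holomorphic during the formal-to-holonomic deformation. Instead it introduces the intermediate class of \emph{almost contact forms} on $M$ (asymptotically holomorphic $1$-forms with $\alpha\wedge(d\alpha)^n\ne 0$ along $M$), triangulates $M$, and on each cell transfers the problem via a $\dibar$-flat diffeomorphism to a domain in $\R^{2n+1}\subset\C^{2n+1}$. There the contact condition becomes an open first-order differential relation $\Rcal$ on sections $D\to\C^{2n+1}$, and the key computation (Lemma~\ref{lem:ample}) is that $\Rcal$ is \emph{ample in the coordinate directions}: the defining function is affine linear in each column of the $1$-jet, so each principal subspace is the complement of a complex affine hyperplane. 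Gromov's convex integration then furnishes the holonomic approximation (Lemma~\ref{lem:main}), yielding an almost contact form on $M$ homotopic to the original formal structure. Only \emph{after} this step is Mergelyan approximation applied, and now it succeeds trivially: $\Cscr^1$-approximation of an almost contact $\alpha$ by a holomorphic $\tilde\alpha$ automatically gives $\tilde\alpha\wedge(d\tilde\alpha)^n\ne 0$ near $M$. The moral is that the h-principle step is unavoidable --- you cannot replace it with soft Stein-theoretic tools --- but it can be carried out in the smooth category along the totally real $M$ before passing to holomorphic data.
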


In dimension $3$ we have the following simpler statement in view of Proposition \ref{prop:basic}.

%
%
\begin{corollary}\label{cor:tubes3}
Let $X$ be a $3$-dimensional complex manifold and $M\subset X$ be a 
totally real submanifold of class $\Cscr^2$. Then, germs of complex contact 
forms on $X$ along $M$ are classified up to homotopy by pairs consisting
of a complex line bundle $L$ over a neighbourhood of $M$ 
satisfying $L^2|_M \cong (K_X)^{-1}|_M$ and an element of $H^1(M;\Z)$.
\end{corollary}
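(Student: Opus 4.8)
The plan is to derive Corollary \ref{cor:tubes3} from Theorem \ref{th:trbasic} together with Proposition \ref{prop:basic}, by replacing the Stein manifold $X$ in Proposition \ref{prop:basic} with a Grauert tube $\Omega\subset X$ around $M$. First I would observe that by Grauert's theorem \cite{Grauert1958AM} the totally real submanifold $M$ (being of class $\Cscr^2$; one first approximates it by a real-analytic submanifold $M'$ isotopic to $M$, or works directly as in the references for Theorem \ref{th:trbasic}) admits a basis of tubular Stein neighbourhoods $\Omega$, each of which deformation retracts onto $M$. Hence every homotopy-theoretic invariant of $\Omega$ — in particular $H^1(\Omega;\Z)$, $H^2(\Omega;\Z)$, and the set of homotopy classes of complex line bundles on $\Omega$ (which by Oka's theorem, \cite[Theorem 5.2.2]{Forstneric2017E}, is $H^2(\Omega;\Z)$) — is canonically identified via restriction with the corresponding invariant of $M$. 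Under this identification the constraint $L^2\cong (K_X)^{-1}$ on $\Omega$ becomes $L^2|_M\cong (K_X)^{-1}|_M$ in $H^2(M;\Z)$, and the second datum $[{\Omega},\C^*]=H^1(\Omega;\Z)$ becomes $H^1(M;\Z)$.

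Next I would assemble the correspondence. A germ of a holomorphic contact form on $X$ along $M$ is, by definition, a holomorphic contact form $\alpha$ on some Stein neighbourhood $\Omega$ of $M$, taken up to shrinking $\Omega$; via the inclusion \eqref{eq:Cont} it determines an element of $\Contf(\Omega)$ and hence, restricting to $M$ (or to a smaller tube), an element of $\Contf(\Omega')$ for every smaller Stein tube $\Omega'$. Two such germs are homotopic (through germs of holomorphic contact forms) exactly when there is a common Stein neighbourhood on which a homotopy of holomorphic contact forms connects them. Proposition \ref{prop:basic}, applied to the Stein $3$-fold $\Omega$, shows the connected components of $\Contf(\Omega)$ are indexed by the pair (i) an isomorphism class of a complex line bundle $L$ on $\Omega$ with $L^2\cong(K_\Omega)^{-1}$, equivalently $L^2|_M\cong (K_X)^{-1}|_M$, and (ii) a homotopy class of trivialisations of $K_\Omega\otimes L^2$, equivalently an element of $H^1(M;\Z)$. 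It remains to pass from \emph{formal} homotopy classes (components of $\Contf$) to \emph{holomorphic} homotopy classes (components of $\Conth$, read as germs): this is precisely what Theorem \ref{th:trbasic} supplies. Indeed, its first assertion says every component of $\Contf(\Omega)$ contains a germ of a holomorphic contact form (so the map from germs of holomorphic contact forms to $\pi_0\Contf$ is surjective), and its second assertion says two holomorphic contact forms near $M$ that are formally homotopic along $M$ are homotopic through holomorphic contact forms near $M$ (so the map is injective on $\pi_0$). Combining these two facts with Proposition \ref{prop:basic} yields the stated bijection.

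Finally I would note two bookkeeping points. One must check that the data in Proposition \ref{prop:basic}, a priori attached to a chosen tube $\Omega$, are independent of the choice: this follows because any two Stein tubes $\Omega_1\supset\Omega_2$ around $M$ each deformation retract onto $M$, so the restriction maps on $H^1$, $H^2$ and on line bundles are isomorphisms compatible with the inclusion $\Omega_2\hookrightarrow\Omega_1$, and the invariants (i), (ii) are visibly natural under restriction; hence they descend to invariants of the germ along $M$, i.e. to invariants depending only on line bundles over a neighbourhood of $M$ and on $H^1(M;\Z)$, exactly as stated. Second, for the constraint in (i) to be stated purely in terms of $M$ one uses that $(K_X)^{-1}|_\Omega\cong (K_\Omega)^{-1}$ for an open subset $\Omega\subset X$ and that restriction of line bundles to $M$ is a bijection on isomorphism classes.

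The main obstacle here is not in this corollary itself, which is essentially a repackaging, but is entirely deferred into Theorem \ref{th:trbasic}: the genuinely hard analytic input is the relative/parametric Mergelyan-type approximation of formal contact data by honest holomorphic contact forms in a Stein tube around $M$, keeping the homotopy fixed where the data are already holomorphic. Granting that, the proof of Corollary \ref{cor:tubes3} is the straightforward combination sketched above.
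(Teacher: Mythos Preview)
Your proposal is correct and follows essentially the same approach as the paper: the paper simply states that Corollary~\ref{cor:tubes3} is the combination of Theorem~\ref{th:trbasic} with Proposition~\ref{prop:basic}, applied on a Grauert tube around $M$. You have spelled out the bookkeeping (deformation retraction to $M$, independence of the choice of tube, and the two directions of Theorem~\ref{th:trbasic} giving surjectivity and injectivity on $\pi_0$) more explicitly than the paper does, but the logic is identical.
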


If $M$ is a totally real submanifold of maximal dimension $n$ in a complex $n$-manifold $X$, 
we have that $TX|_M\cong TM\oplus TM$ (since the complex structure operator $J$ on $TX$
induces an isomorphism of the tangent bundle $TM$ onto the normal bundle of $M$ in $X$). 
Replacing $X$ by a Grauert tube around $M$, it follows that 
$c_1(TX) = c_1(TX|_M)=c_1(TM\otimes \C)$,
so the canonical class of $X$ only depends on $M$. We shall see in Example \ref{ex:2sphere} 
that this is not the case in general for totally real submanifolds of lower dimension.

%
%
\begin{example}\label{ex:3sphere}
Let $X$ be a Grauert tube around  the $3$-sphere $S^3$. Then, 
$H^1(X;\Z)=H^1(S^3;\Z)=0$ and $H^2(X;\Z)=H^2(S^3;\Z)=0$. 
By Corollary \ref{cor:tubes3} there is a unique homotopy class of germs of complex 
contact structures around $S^3$ in $X$.
We get it for instance by taking a totally real embedding of $S^3$ into $\C^3$
(see \cite[Theorem 1.4]{Forstneric1986EM} or \cite[p.\ 193]{Gromov1986E})
and using the standard complex contact form $dz+xdy$ on $\C^3$.

It was shown by Eliashberg \cite{Eliashberg1989IM}
that there exist countably many homotopy classes of smooth contact structures on $S^3$.
By choosing them real analytic, we can complexity them to
obtain holomorphic contact structures on neighbourhoods of $S^3$ in $X$.
By what has been said above, these structures are homotopic to each 
other as holomorphic contact bundles. 
\qed\end{example}

%
%
\begin{example}\label{ex:2sphere}
Let $Y$ be a Grauert tube around the $2$-sphere $S^2$.
An explicit example is the complexified $2$-sphere 
\[
	Y=\{(z_0,z_1,z_2)\in\C^3: z_0^2+z_1^2+z_2^2=1\}.
\]
Recall that the holomorphic tangent bundle any smooth complex hypersurface in $\C^n$
is holomorphically trivial (see  \cite[Proposition 8.5.3, p.\ 370]{Forstneric2017E});
in particular, $TY$ is trivial. Let $\pi : X\to Y$ be a holomorphic line bundle; 
the isomorphism classes of such bundles correspond to the elements of $H^2(Y;\Z)=H^2(S^2;\Z)=\Z$. 
Considering $Y$ as the zero section of $X$, we can view 
$X$ as the normal bundle $N_{Y,X}$ of $Y$ in $X$. Since $TY$ is trivial,
the adjunction formula for the canonical bundle gives
\[
	K_X|_Y \cong K_Y \otimes (N_{Y,X})^{-1} = X^{-1}.
\] 
For each choice of the bundle $X\to Y$ with even Chern number $c_1(X)\in H^2(Y; \Z)=\Z$,  
$(K_X)^{-1}$ has a unique holomorphic square root $L$ with $c_1(L)=\frac{1}{2}c_1(X)$. 
By Corollary \ref{cor:tubes3} there is a holomorphic $L$-valued contact 
form on a neighbourhood of $S^2$ in $X$. A Stein tube around $S^2$ 
in the trivial bundle $X=Y\times \C$ can be represented as a domain in $\C^3$,
for example, as a tube around the standard $2$-sphere $S^2\subset \R^3\subset \C^3$.
The examples with nonzero  Chern classes clearly cannot be represented 
as domains in $\C^3$.
\qed\end{example}

%
%
\begin{example}\label{ex:circle}
Let $X$ be a $3$-dimensional Grauert tube around an embedded circle $S^1\subset X$.
In this case $H^2(X;\Z)=H^2(S^1;\Z)=0$,  and by Corollary \ref{cor:tubes3} the homotopy 
classes of holomorphic contact forms on $X$ along $S^1$ are classified by  
$H^1(X;\Z)=H^1(S^1;\Z)=\Z$. We can see them explicitly on $X=\C^* \times \C^2$ as follows.
Let $(x,y,z)$ be complex coordinates on $\C^3$. Set $S^1=\{(x,0,0)\in\C^3 :|x|=1\}$. 
For each $k\in\Z$ let
\[
 	\alpha_k =  
	\begin{cases} 
   		dz+\frac{1}{k+1} x^{k+1} dy  & \text{if } k \ne -1, \\
  		\frac{1}{\sqrt 2} \left(\frac{1}{x} dz + xdy\right)        & \text{if } k=-1.
  	\end{cases}
\]
Then, $\alpha_k\wedge d\alpha_k = x^k dx\wedge dy\wedge dz$
for every $k\in\Z$, so the homotopy class of the corresponding framing 
of the trivial bundle $X\times \C\to X$  equals $k$. 
By Remark \ref{rem:Cor1.6} the contact bundle $\xi_k=\ker\alpha_k$
on $\C^* \times \C^2$ is homotopic to $\xi_0$ if $k$ is even, and to $\xi_1\cong \xi_{-1}$ is
$k$ is odd. The bundles $\xi_0$ and $\xi_1$ are not homotopic to each other through 
contact bundles.

Note that the 1-form $\alpha_k$ for $k\ne -1$ is the pullback of the standard contact form
$\alpha_0=dz+xdy$ on $\C^3$ by the covering map $\C^* \times \C^2 \to \C^* \times \C^2$,  
$(x,y,z)\mapsto (x^{k+1}/(k+1),y,z)$. In order to understand 
$\alpha_{-1}$, consider the contact form on $\C^3$ given by
\[
	\beta=\cos x \,\cdotp dz+\sin x \,\cdotp dy.
\] 
It defines the standard structure on $\C^3$, because it is the pullback of 
$dz - ydx$ by the automorphism $(x,y,z) \to (x,y \cos x - z \sin x, y\sin x  + z \cos x)$.
%
%
%
%
Let $F:\C^3\to \C^*\times\C^2$ denote the universal covering map 
$F(x,y,z)=(\E^{\imath x},y,z)$. A calculation shows that $\beta=F^*\alpha'$, 
where $\alpha'$ is the contact form on $\C^*\times\C^2$ given by
\[
	\alpha'=\frac{1}{2}\left(x+\frac{1}{x}\right) dz + \frac{1}{2\imath}\left(x-\frac{1}{x}\right) dy,
	\qquad \alpha'\wedge d\alpha' = \frac{1}{\imath x} dx \wedge dy \wedge dz.
\]
Then, $\alpha_{-1}$ is homotopic to $\alpha'$ through the family
of contact forms on $\C^* \times \C^2$ defined by
\[
	\sigma_t = \frac{1}{\sqrt{2(1+t^2)}} 
	\left( \left(tx+\frac{1}{x}\right) dz +  \left(x-\frac{t}{x}\right) \E^{-\imath \pi t/2} dy \right),
	\quad t\in[0,1].
\]
We have $\sigma_0=\alpha_{-1}$, $\sigma_1=\alpha'$, and 
$\sigma_t\wedge d\sigma_t = \E^{-\imath \pi t/2}x^{-1} dx\wedge dy \wedge dz$ for all $t\in[0,1]$.
\qed\end{example}

%
%
\begin{example}\label{ex:3torus}
The previous example can be generalised to $(\C^*)^2\times \C$ and $(\C^*)^3$
which are complexifications of the $2$-torus and the $3$-torus, respectively.
Let us consider the latter.
Denote by $T^k$ the $k$-dimensional torus, the product of $k$ copies of the circle $S^1$. 
The domain $X=(\C^*)^3$ is a Stein tube around the standard totally real embedding 
$T^3\hra \C^3$ onto the distinguished boundary of the polydisc.
We have $H^2(X;\Z)=H^2(T^3;\Z)=\Z^3$ and $H^1(X;\Z)= H^1(T^3;\Z)= \Z^3$
(see Rotman \cite[p.\ 404]{Rotman1988}). Clearly, $K_X$ is trivial,
and since $H^2(X;\Z)$ is a free abelian group, its only square root is the
trivial bundle. Hence by \eqref{eq:KX} all contact forms on $X$ assume values in the trivial bundle, 
and we have $\Z^3$-many homotopy classes of trivialisations of the latter. 
Consider the following family of contact forms on $X=(\C^*)^3$, 
where $(k,l,m)\in\Z^3$:
\[
 	\alpha_{k,l,m} =  
	\begin{cases} 
   		z^m dz+\frac{1}{k+1} x^{k+1} y^l dy  & \text{if } k \ne -1, \\
  		\frac{1}{2x} z^m dz + x y^l dy              & \text{if } k=-1,
  	\end{cases}
\]
A calculation shows that $\alpha_{k,l,m} \wedge d\alpha_{k,l,m}=x^k y^l z^m dx\wedge dy\wedge dz$,
so this family provides all possible homotopy classes of framings of the trivial bundle
$X\times \C$. 
\qed\end{example}

The above examples suggest that in many natural cases one can find globally
defined holomorphic contact forms representing all homotopy
classes in Proposition \ref{prop:basic}. 

\begin{problem} 
Is it possible to represent every homotopy class of formal complex contact structures 
on an affine algebraic manifold by an algebraic contact form?
\end{problem}

Our proofs of Theorems \ref{th:trbasic} and \ref{th:trparametric} 
proceed by triangulating the manifold $M$ and inductively 
deforming a formal contact structure $(\alpha,\beta)$ to an almost contact structure along $M$
(see Definition \ref{def:almostcontact} (b) for this notion).
We show that the open partial differential relation of first order, controlling 
the almost contact condition on a totally real disc, is ample in the coordinate directions; 
see Lemma \ref{lem:ample}. Hence, Gromov's h-principle \cite{Gromov1986E,Gromov1973IZV}
can be applied to extend an almost contact structure from the boundary
of a cell to the interior, provided that it extends as a formal complex contact structure; 
see Lemma \ref{lem:main}. Finally, approximating an almost contact form $\alpha$ on $M$
sufficiently closely in the fine $\Cscr^1$ topology by a holomorphic $1$-form $\wt \alpha$ 
ensures that $\wt \alpha$ is a contact form on a neighbourhood of $M$ in $X$.
The same arguments apply to families of such forms, thereby yielding the
parametric h-principle in Theorem \ref{th:trparametric}.

A similar method is used to prove Theorems \ref{th:basic} and \ref{th:parametric}
(see Sect.\ \ref{sec:proofs}). The inductive step amounts to extending 
a holomorphic contact form $\alpha$ from a neighbourhood of a compact strongly 
pseudoconvex domain $W$ in $X$ across a handle whose core is a totally real disc $M$ 
attached with its boundary sphere $bM$ to $bW$. 
More precisely, $M\setminus bM\subset X\setminus W$, the attachment is $J$-orthogonal 
along $bM$ (where $J$ denotes the almost complex structure on $X$),  
and $bM$ is a Legendrian submanifold of the strongly 
pseudoconvex hypersurface $bW$ with its smooth contact structure
given by the complex tangent planes.
The union $W\cup M$ then admits a basis of tubular Stein neighbourhoods
(see \cite{Eliashberg1990,ForstnericKozak2003}).
Assuming that $\alpha$ extends to $M$ as a formal contact structure, 
Lemma \ref{lem:hprinciple-AC} furnishes an almost contact extension. 
Finally, by Mergelyan's theorem we can approximate $\alpha$ in the $\Cscr^1$ topology
on $W\cup M$ by a holomorphic contact form $\wt \alpha$ on a 
Stein neighbourhood of $W\cup M$.

With these analytic tools in hand, Theorems \ref{th:basic} and \ref{th:parametric}
are proved by following the scheme developed by Eliashberg \cite{Eliashberg1990}
in his landmark construction of Stein manifold structures on any 
smooth almost complex manifold $(X,J)$ with the correct handlebody structure.
(The special case $\dim X=2$ is rather different and was explained by Gompf
\cite{Gompf1998,Gompf2005,Gompf2013}, but this is not relevant here.)
A more precise explanation of Eliashberg's construction was given by Slapar and the author 
\cite{ForstnericSlapar2007MRL,ForstnericSlapar2007MZ} in their proof of the 
{\em soft Oka principle} for maps from any Stein manifold $X$ to an arbitrary complex manifold $Y$. 
Expositions are also available in the monographs by Cieliebak and Eliashberg
\cite[Chap.\ 8]{CieliebakEliashberg2012} and the author \cite[Secs.\ 10.9--10.11]{Forstneric2017E}.

Finally, the proof of Theorem \ref{th:ifthen} (see Sect.\ \ref{sec:proofs})
follows the induction scheme used in Oka theory; see \cite[Sect.\ 5]{Forstneric2017E}. 
Besides the tools already mentioned above,
an additional ingredient is a new gluing lemma for holomorphic contact forms;
see Lemma \ref{lem:gluingcontact}.

%
%
\section{Germs of complex contact structures on domains in $\R^{2n+1}\subset \C^{2n+1}$} 
\label{sec:tr}

We denote the complex variables on $\C^{n}$ by $z=(z_1,\ldots,z_{n})$ with $z_i = x_i+\imath y_i$
for $i=1,\ldots,n$, where $\imath=\sqrt{-1}$. We shall consider $\R^{n}$ as the standard real subspace 
of $\C^{n}$. 

Let $D$ be a compact set in $\R^{2n+1}$ $(n\in\N)$ which is 
the closure of a domain with piecewise $\Cscr^1$ boundary. 
We shall denote by $bD$ the boundary of $D$. In this section 
we consider the problem of approximating a holomorphic contact form $\alpha$, defined
on a neighbourhood of a compact subset $\Gamma\subset bD$, 
by a holomorphic contact form $\wt \alpha$ defined on a neighbourhood of $D$ 
in $\C^{2n+1}$, provided that $\alpha$ admits a formal 
contact extension to $D$ in the sense of Definition \ref{def:formal}.
(For applications in this paper, it suffices to consider the case when $D$ is the standard 
handle $D^m\times D^{d} \subset \R^{2n+1}$ of some index $m\in \{1,\ldots,2n+1\}$ 
and $d=2n+1-m$, where $D^m\subset \R^m$ and $D^d\subset \R^d$ are closed unit balls 
in the respective spaces, and $\Gamma=bD^m \times D^d$ is the attaching set of the handle.)
We will show that the parametric h-principle holds in this problem (see Lemma \ref{lem:main}).

We begin with preliminaries.
Let $l\in \N$, and let $K$ be a closed set in a complex manifold $X$. 
A function $f$ of class $\Cscr^{l}$ on an open neighbourhood $U\subset X$ of $K$
is said to be {\em $\dibar$-flat to order $l$} on $K$ if the jet of $\dibar f$
of order $l-1$ vanishes at each point of $K$. In any system of local
holomorphic coordinates $z=(z_1,\ldots, z_n):V\to\C^n$ on $X$ centred at a point $x_0\in K$, 
this means that the value and all partial derivatives of order up to $l-1$ 
of the functions $\di f/\di \bar z_j= \frac{1}{2}\left(\di f_{x_j}+\imath\, \di f_{y_j}\right)$ 
$(j=1,\ldots,n)$ vanish at each point $x\in K\cap V$. In particular, such $f$ satisfies the 
Cauchy-Riemann equations at every point  $x\in K\cap V$:
\[
	\frac{\di f}{\di z_j}(x)=\frac{\di f}{\di x_j}(x)=\frac{1}{\imath} \frac{\di f}{\di y_j}(x),
	\qquad j=1,\ldots,n.
\] 
If $f$ is smooth of class $\Cscr^\infty$ and the above holds for all $l\in\N$,  
then $f$ is said to be {\em $\dibar$-flat} (to infinite order) on $K$.

Assume now that $D\subset \R^{2n+1}\subset \C^{2n+1}$ is a compact domain with piecewise
$\Cscr^1$ boundary in $\R^{2n+1}$. It is classical (see e.g.\ 
\cite[Lemma 4.3]{HormanderWermer1968} or \cite[Proposition 5.55]{CieliebakEliashberg2012}) 
that every function $f:D\to\C$ of class $\Cscr^l$ extends
to a $\Cscr^l$ function $F:\C^{2n+1}\to \C$ which is $\dibar$-flat to order $l$ on $D$.
When $f$ is of class $\Cscr^\infty$, we can obtain such an extension explicitly by first extending
$f$ to a smooth function on $\R^{2n+1}$ and setting
\[
	F(x+\imath y) = \sum_{|I|\le l} \frac{1}{I!} \frac{\di^{|I|}f}{\di x^I}(x) \, \imath^{|I|} y^I
	= f(x) + \imath \sum_{i=1}^{2n+1} \frac{\di f}{\di x_i}(x) y_i + O(|y|^2). 
\]
Here, $I=(i_1,\ldots,i_{2n+1})\in \Z_+^{2n+1}$, $|I|=i_1+\cdots+ i_{2n+1}$, 
$\frac{\di^{|I|}f}{\di x^I}(x)=\frac{\di^{|I|}f}{\di x_1^{i_1}\cdots \, \di x_n^{i_{2n+1}}}$,
and $y^I =y_1^{i_1}\cdots y_n^{i_{2n+1}}$. 
If $f$ is only of class $\Cscr^l$ then a $\dibar$-flat extension is 
obtained by applying Whitney's jet-extension theorem \cite{Whitney1934TAMS}
to the jet on the right hand side above.

A smooth differential $(1,0)$-form 
\begin{equation}\label{eq:alpha2}
	\alpha=\sum_{i=1}^{2n+1} a_i(z) dz_i
\end{equation}
on a neighbourhood of $D$ in $\C^{2n+1}$ is said to be {\em $\dibar$-flat to order $l$}
on $D$ is every coefficient function $a_i$ is such. 
Every smooth $(1,0)$-form defined on $D\subset \R^{2n+1}$ extends to
a $\dibar$-flat $(1,0)$-form on $\C^{2n+1}$ by taking $\dibar$-flat extensions of its coefficient.
Assume that $\alpha$ is such. In view of the Cauchy-Riemann equations we have for each $x\in D$ that
\begin{equation}\label{eq:dalpha}
	d\alpha(x) = \di\alpha(x) = 
	\sum_{1\le i<j\le 2n+1} 
	\left( \frac{\di a_j}{\di x_i}(x) - \frac{\di a_i}{\di x_j}(x) \right) dz_i\wedge dz_j.
\end{equation}
Write $p_{i,j}(x)=\frac{\di a_i}{\di x_j}(x)$ and set
\begin{equation}\label{eq:betaij}
	\beta_{i,j}(x) := p_{j,i}(x) - p_{i,j}(x) = \frac{\di a_j}{\di x_i}(x) - \frac{\di a_i}{\di x_j}(x).
\end{equation}	
With this notation, we have for all $x\in D$ that
\begin{equation}\label{eq:dalpha2}
	d\alpha(x) = \beta(x) = \sum_{1\le i<j\le 2n+1} \beta_{i,j}(x) \, dz_i\wedge dz_j
\end{equation}
and 
\begin{equation}\label{eq:betaton}
	(d\alpha)^n(x) = \beta^n(x) = \sum_{i=1}^{2n+1}  b_i(x) \, 
	dz_1\wedge\cdots \wh{dz_i}\cdots \wedge dz_{2n+1},
\end{equation}
where $\wh {dz_i}$ indicates that this term is omitted. Every coefficient $b_i(x)$  
in \eqref{eq:betaton} is a homogeneous polynomial of order $n$ in the coefficients $\beta_{j,k}$
of $\beta =d\alpha$ \eqref{eq:dalpha}, obtained as follows. Let $\Pcal=\{A_1,\ldots, A_n\}$ 
be a partition of the set $\{1,2\ldots,2n+1\}\setminus \{i\}$ into a union of $n$ 
pairs $A_k = (i_k,j_k)$ $(k=1,\ldots, n)$, with $i_k<j_k$. Then, 
\begin{equation}\label{eq:bi}
	b_i(x) = n!\,  \sum_\Pcal \prod_{(i_k,j_k) \in \Pcal} \beta_{i_k,j_k}(x)
	= n! \, \sum_\Pcal \prod_{(i_k,j_k) \in \Pcal} \left(p_{j_k,i_k}(x) - p_{i_k,j_k}(x) \right)
\end{equation}
for all $x\in D$. Finally, from \eqref{eq:dalpha} and  \eqref{eq:betaton} we obtain for all $x\in D$ that
\begin{equation}\label{eq:alphadalpha}
	\alpha(x) \wedge (d\alpha)^n(x) = \alpha(x) \wedge \beta^n (x) =
	\biggl(\, \sum_{i=1}^{2n+1} (-1)^{i-1} a_i(x)b_i(x) \!\biggr)
	dz_1\wedge \cdots\wedge dz_{2n+1}.
\end{equation} 

A smooth $(1,0)$-form $\alpha$ on $\C^{2n+1}$, defined on a neighbourhood of $D\subset \R^{2n+1}$ 
and $\dibar$-flat on $D$ to the first order, is said to be an {\em almost contact form} on $D$ if 
\begin{equation}\label{eq:almostcontact}	
	\alpha \wedge (d\alpha)^n \ne 0 \ \ \text{at every point of $D$}.
\end{equation}
Note that $d\alpha|_D=\di\alpha|_D$.
Approximating $\alpha$ sufficiently closely in the $\Cscr^1$ topology on $D$ 
by a holomorphic $1$-form $\wt\alpha$ gives a holomorphic contact structure 
$\tilde\xi=\ker\wt \alpha$ on a neighbourhood of $D$ in $\C^{2n+1}$. 
If the coefficients of $\alpha$ are real analytic, then the complexification
of $\alpha$ defines a holomorphic contact structure near $D$. 

We see from \eqref{eq:betaij}, \eqref{eq:bi}, and \eqref{eq:alphadalpha} that
the condition \eqref{eq:almostcontact} depends only on the first order jet of the restrictions $a_i|_{D}$ 
of the coefficients of $\alpha$ to $D$, so  it defines an open set  in the space of 
$1$-jets of $1$-forms on $D$. More precisely, we may view $\alpha|_D$ 
as a smooth section $x\mapsto (x,a_1(x),\ldots, a_{2n+1}(x))$
of the trivial bundle $E=D\times \C^{2n+1}\to D$.
Let $E^{(1)}\to E$ be the bundle of $1$-jets of sections of $E\to D$.
The fibre of $E^{(1)}$ over a point $(x,a)\in E=D\times \C^{2n+1}$ (with $a=(a_1,\ldots, a_{2n+1}))$ 
consists of all matrices $p=(p_{i,j})\in \C^{(2n+1)\cdotp(2n+1)}$.
A section $D\to E^{(1)}$ is a map $x\mapsto (x,a(x),p(x))\in E^{(1)}$,
where $a:D\to\C^{2n+1}$ and $p:D\to  \C^{(2n+1)\cdotp(2n+1)}$. 
Such a section is said to be {\em holonomic} if $p(x)$ is the $1$-jet of $a(x)$
for each $x\in D$, that is, $p_{i,j}(x)=\frac{\di a_i}{\di x_j}(x)$ for all $i,j=1,\ldots, 2n+1$.
Let $\Rcal$ be the open subset of $E^{(1)}$ defined by
\begin{equation}\label{eq:Ccal}
	\Rcal = \biggl\{(x,a,p) \in E^{(1)} :  \sum_{i=1}^{2n+1} (-1)^{i-1} a_i b_i \ne 0\biggr\},
\end{equation}
where each $b_i$ is determined by $p=(p_{j,k})$ according to the formula \eqref{eq:bi}
(ignoring the base point $x$). Thus, $\Rcal$ is an  open differential relation of first order
in $E^{(1)}$ which controls the contact condition for $\dibar$-flat 1-forms along $D$.

%
%
\begin{lemma}\label{lem:ample}
The partial differential relation $\Rcal$ defined by \eqref{eq:Ccal} 
is ample in the coordinate directions (in the sense of M.\ Gromov \cite{Gromov1973IZV,Gromov1986E}).
\end{lemma}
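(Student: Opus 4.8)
The plan is to unwind the definition of ampleness in the coordinate directions and reduce it to an elementary fact about complements of complex hypersurfaces. Recall that for a first-order relation $\Rcal\subset E^{(1)}$ over the base $D\subset\R^{2n+1}$ with its standard coordinates $x_1,\dots,x_{2n+1}$, being ample in the coordinate directions means the following: for every point $(x,a)\in E=D\times\C^{2n+1}$, every index $j\in\{1,\dots,2n+1\}$, and every choice of the jet entries $p_{i,j'}$ with $j'\ne j$, the \emph{principal subspace}
\[
	P=\bigl\{\,(p_{1,j},\dots,p_{2n+1,j}):p_{i,j}\in\C\,\bigr\}\;\cong\;\C^{2n+1}
\]
obtained by letting the $j$-th column of the jet matrix $p=(p_{k,l})$ vary has the property that the slice $\Rcal\cap P$ is ample in $P$ in Gromov's sense, that is, the convex hull of each connected component of $\Rcal\cap P$ equals $P$ (the empty set being ample by convention).

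First I would observe that, once one restricts to $\dibar$-flat forms, no $\bar z$-derivatives enter the contact polynomial, so along a fixed principal subspace the relation is cut out holomorphically. Indeed, by \eqref{eq:betaij} and \eqref{eq:bi} each $b_i$ is a polynomial in the jet entries $p_{k,l}$ with no conjugated variables; hence, holding $x$, $a$, and all columns of $p$ other than the $j$-th fixed, the function
\[
	\phi_j(p_{1,j},\dots,p_{2n+1,j}):=\sum_{i=1}^{2n+1}(-1)^{i-1}a_i\,b_i
\]
appearing in \eqref{eq:alphadalpha} and \eqref{eq:Ccal} is a holomorphic polynomial map $\C^{2n+1}\to\C$, and $\Rcal\cap P=\{\phi_j\ne 0\}$. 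Now I would split into cases. If $\phi_j\equiv 0$ on $P$ --- which happens, for instance, whenever $a=0$ --- then $\Rcal\cap P=\emptyset$, and the empty set is ample. Otherwise $Z_j:=\{\phi_j=0\}$ is a proper complex-algebraic hypersurface of $P\cong\C^{2n+1}$, so $\Rcal\cap P=P\setminus Z_j$ is open and dense and therefore has convex hull all of $P$; moreover it is connected, since any two of its points lie on a complex line on which $\phi_j$ restricts to a nonzero one-variable polynomial (it is nonzero at the two chosen points), so the trace of $P\setminus Z_j$ on that line is $\C$ with finitely many points removed, which is connected and joins the two points inside $P\setminus Z_j$. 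In every case $\Rcal\cap P$ is ample, and since $(x,a)$, $j$ and the remaining columns were arbitrary, $\Rcal$ is ample in the coordinate directions.

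The hard part here is essentially nonexistent: the content of the lemma is the single observation that the contact condition for $\dibar$-flat $1$-forms along $D$ is polynomial and, in each jet column, holomorphic, together with the standard fact that the complement of a complex hypersurface in $\C^m$ is open, dense and connected. The only things needing a little care are bookkeeping: identifying via \eqref{eq:betaij} the principal subspaces with the columns $(p_{1,j},\dots,p_{2n+1,j})$ of the jet matrix, and remembering that the degenerate, empty slices count as ample.
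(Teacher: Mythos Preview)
Your argument is correct. The skeleton matches the paper's---show that each principal slice of $\Rcal$ is either empty or the complement of a thin complex-analytic set---but the key observation differs in sharpness. The paper notices that, because each element of $\{1,\dots,2n+1\}$ occurs in at most one pair of any partition appearing in \eqref{eq:bi}, the defining function $h(a,p)=\sum_i(-1)^{i-1}a_ib_i$ is actually \emph{affine linear} in each principal direction, not merely polynomial. Hence the slice $\Rcal_e$ is either empty or the complement of a single complex affine hyperplane in $\C^{2n+1}$, and both the connectedness and the convex-hull condition are immediate without any appeal to the topology of general hypersurface complements. Your route, using only holomorphic polynomiality together with the standard fact that the complement of a proper complex hypersurface in $\C^m$ is connected, is more robust---it would still work if the relation were of higher degree in each column---but here the affine linearity is available and reduces the lemma to a one-line computation. (A minor notational point: the paper slices along the rows $p_i=(p_{i,1},\dots,p_{i,2n+1})$ while you slice along the columns $(p_{1,j},\dots,p_{2n+1,j})$; the affine linearity holds in both, for the same combinatorial reason.)
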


\begin{proof} 
Choose an index $i\in \{1,\ldots, 2n+1\}$. Write $p=(p_1,\ldots,p_{2n+1})$ and 
$p_j=(p_{j,1},\ldots,  p_{j,2n+1}) \in\C^{2n+1}$ for $j=1,\ldots,2n+1$.
Consider a restricted $1$-jet of the form $e=(x,a,p_1,\ldots, \wh{p_i},\ldots,p_{2n+1})$
where the vector $p_i$ is omitted. Set
\begin{equation}\label{eq:restricted}
	\Rcal_e = \{p_i \in \C^{2n+1} : (x,a,p_1,\ldots, p_{i-1},p_i,p_{i+1},\ldots,p_{2n+1}) \in \Rcal\}. 
\end{equation}
The differential relation $\Rcal$ is said to be {\em ample in the coordinate directions} if every set 
$\Rcal_e$ of this type  is either empty, or else the convex hull of 
each of its connected components equals $\C^{2n+1}$.
In the case at hand, we see from \eqref{eq:bi} and \eqref{eq:alphadalpha} 
that the function 
\[	
	h(a,p)=\sum_{j=1}^{2n+1} (-1)^{i-1} a_j b_j(p),
\]
where $b_j=b_j(p)$ is determined by \eqref{eq:bi}, is affine linear in 
$p_i = (p_{i,1},\ldots,  p_{i,2n+1})$. Indeed, every $p_{i,j}$ appears at most once 
in each of the products in \eqref{eq:bi}. Since
\[
	\Rcal_e = \{p_i \in \C^{2m+1} : h(a,p_1,\ldots, p_i,\ldots,p_{2n+1})\ne 0\},
\]
it follows that $\Rcal_e$ is either empty or else the complement of a complex affine
hyperplane in $\C^{2n+1}$; in the latter case its convex hull equals $\C^{2n+1}$.
This proves Lemma \ref{lem:ample}.
\end{proof}

\begin{remark}\label{rem:false}
Note that the real analogue of Lemma \ref{lem:ample} is false. For this reason,
the corresponding h-principle for real contact structures, due to Gromov 
\cite{Gromov1969}, does not hold for compact smooth manifolds, but only on
open ones. 
\end{remark}

In order to apply this lemma, we need the following observation. Let $\alpha$ be a $1$-form
\eqref{eq:alpha2} with smooth coefficients $a=(a_1,\ldots, a_{2n+1}): D\to\C^{2n+1}$, and let
\begin{equation}\label{eq:beta2}
	\beta(x) = \sum_{1\le i<j\le 2n+1} \beta_{i,j}(x) \, dz_i\wedge dz_j,\qquad x\in D, 
\end{equation}
be a smooth $2$-form on $D$. (At this point we consider forms with values in the trivial line bundle.)
Note that the linear projection 
$\C^{(2n+1)^2} \ni (p_{i,j})\mapsto (\beta_{i,j}=p_{j,i}-p_{i,j})\in \C^{n(2n+1)}$ is surjective
and hence a Serre fibration, i.e., it enjoys the homotopy lifting property.
In particular, we may write $\beta_{i,j}=p_{j,i}-p_{i,j}$ for some smooth functions $p_{i,j}$ on $D$. 
Let $p=(p_{i,j}):D\to \C^{(2n+1)^2}$.
It then follows from the definition of the differential relation $\Rcal$ (see \eqref{eq:Ccal}) that 
$(\alpha,\beta)$ is a formal contact structure on $D$ (see Definition \ref{def:formal}), i.e.,  
\begin{equation}\label{eq:alphabeta}
	\alpha \wedge \beta^n \ne 0 \quad \text{on $D$},
\end{equation}
if and only if the map $x\mapsto (x,a(x), p(x))$ is a (not necessarily holonomic) section of $\Rcal$. 
Note that the condition \eqref{eq:alphabeta} is purely algebraic and does not
depend on the particular choices of extensions of $\alpha$ and $\beta$ to a neighbourhood of $D$.

A seminal result of M.\ Gromov says that sections of an ample open differential relation $\Rcal$ 
of first order satisfy all forms of the h-principle (see \cite[Sect.\ 2.4]{Gromov1986E}, \cite[Sect.\ 18.2]{EliashbergMishachev2002}, or \cite[Theorem 4.2]{Spring2010}).
This means that every section of $\Rcal$ is homotopic through sections of $\Rcal$
to a holonomic section, the homotopy can be chosen fixed on a compact
subset of the base domain where the given section is already holonomic, and a similar
statement holds for families of sections, where the homotopy is kept fixed on the set of holonomic
sections. The basic technical result is the following; we state it for the case at hand.  
(See for instance \cite[Lemma 3.1.3, p.\ 339]{Gromov1973IZV} which is stated
for the special case when $D$ is a compact cube and $\Gamma=bD$;
the general case follows by induction on a suitable triangulation of the pair $(D,\Gamma)$.
A brief survey is also available in \cite[Sect.\ 1.10]{Forstneric2017E}.)

%
%
\begin{lemma}\label{lem:main}
Let $D\subset \R^{2n+1}$ be a compact domain with piecewise
$\Cscr^1$ boundary, and let $\Gamma\subset bD$ be the closure of an open 
subset of $bD$ with piecewise $\Cscr^1$ boundary. 
Assume that $\alpha$ is a smooth $\dibar$-flat $(1,0)$-form and 
$\beta$ is a smooth $(2,0)$-form on a neighbourhood of $D$ in $\C^{2n+1}$ 
(see \eqref{eq:alpha2}, \eqref{eq:beta2}) such that \eqref{eq:alphabeta} holds 
and $d\alpha(x)=\beta(x)$ for all $x\in \Gamma$, i.e.,
\[
	\beta_{i,j}(x) = \frac{\di a_j}{\di x_i}(x) - \frac{\di a_i}{\di x_j}(x) \quad
	\text{for all $x\in \Gamma$ and $i,j=1,\ldots,2n+1$}.
\]
Given $\epsilon>0$ there is a homotopy $(\alpha_t,\beta_t)$ $(t\in [0,1])$ 
of pairs of forms of the same type satisfying the following conditions. 
\begin{enumerate}[\rm (i)]
\item $(\alpha_0,\beta_0)=(\alpha,\beta)$.
\item $\alpha_t(x) \wedge \beta_t(x)^n \ne 0$ for all $x\in D$ and $t\in [0,1]$.
\item $|\alpha_t(x)-\alpha(x)|<\epsilon$ for all $x\in D$ and $t\in [0,1]$. 
\item The homotopy is fixed for $x\in \Gamma$.
\item $\beta_1=d\alpha_1$ holds at all points of $D$, i.e., $\alpha_1$ is an almost contact form on $D$.
\end{enumerate}	
Assume furthermore that $P$ is a compact Hausdorff space, $Q\subset P$ is a closed subspace, 
and $\{(\alpha_p,\beta_p)\}_{p\in P}$ is a continuous family of data as above 
such that for every $p\in Q$ we have that $d\alpha_p=\beta_p$ on $D$.
Then, there is a homotopy $(\alpha_{p,t},\beta_{p,t})$ $(t\in [0,1])$ which is fixed (independent of $t$) 
for every $p\in Q$ and satisfies conditions (i)--(v) for every $p\in P$.
\end{lemma}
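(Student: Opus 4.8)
The plan is to deduce Lemma \ref{lem:main} directly from Gromov's h-principle for ample differential relations, using the reformulation established just before the statement: a pair $(\alpha,\beta)$ satisfying \eqref{eq:alphabeta} on $D$ corresponds to a (generally non-holonomic) section $\sigma\colon x\mapsto (x,a(x),p(x))$ of the open relation $\Rcal\subset E^{(1)}$, where $p_{i,j}$ is any smooth choice with $\beta_{i,j}=p_{j,i}-p_{i,j}$; the condition $d\alpha=\beta$ on $\Gamma$ says exactly that $\sigma$ is \emph{holonomic} over $\Gamma$ (there $p_{i,j}=\di a_i/\di x_j$), and the conclusion $\beta_1=d\alpha_1$ on $D$ says $\sigma$ has been made holonomic over all of $D$. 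By Lemma \ref{lem:ample} the relation $\Rcal$ is ample in the coordinate directions, so Gromov's convex-integration theorem (in the relative, parametric form quoted from \cite[Lemma 3.1.3]{Gromov1973IZV}, \cite[Sect.\ 2.4]{Gromov1986E}, \cite[Sect.\ 18.2]{EliashbergMishachev2002}) applies.

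First I would fix the choice of $p=(p_{i,j})\colon D\to\C^{(2n+1)^2}$ lifting $\beta$, arranging moreover that $p_{i,j}(x)=\di a_i/\di x_j(x)$ holds for $x\in\Gamma$; this is possible because the linear surjection $(p_{i,j})\mapsto(p_{j,i}-p_{i,j})$ is a Serre fibration, so the given holonomic lift over $\Gamma$ extends to a lift over $D$ with the prescribed boundary values. This produces a section $\sigma$ of $\Rcal$ that is holonomic over the compact subset $\Gamma\subset bD$. Next I apply Gromov's theorem to obtain a homotopy of sections $\sigma_t$ of $\Rcal$, $t\in[0,1]$, with $\sigma_0=\sigma$, $\sigma_1$ holonomic, the homotopy fixed over $\Gamma$, and — this is the point where the ampleness in coordinate directions is genuinely used, since convex integration only perturbs the derivative coordinates $p_i$ and leaves the zeroth-order part $a$ essentially fixed up to a prescribed small error — the $\Cscr^0$-distance $\sup_{D}|a_t-a|<\epsilon$. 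Writing $\sigma_t=(x,a_t(x),p_t(x))$, I then set $\alpha_t=\sum_i a_{t,i}\,dz_i$ (extended $\dibar$-flatly off $D$) and $\beta_t=\sum_{i<j}(p_{t,j,i}-p_{t,i,j})\,dz_i\wedge dz_j$. Membership in $\Rcal$ gives condition (ii), the $\Cscr^0$-control gives (iii), the relative statement gives (iv), holonomy of $\sigma_1$ gives (v) via \eqref{eq:dalpha}, and (i) is the normalization. The parametric statement with $(P,Q)$ follows verbatim from the parametric version of Gromov's theorem: over $Q$ the sections $\sigma_p$ are already holonomic, and the parametric h-principle keeps the homotopy stationary there.

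Two small technical points need care. The lemma asks for a homotopy \emph{of pairs of forms of the same type}, i.e.\ $\alpha_t$ must remain $\dibar$-flat to first order on $D$; this is automatic if, after convex integration on $D$, one re-extends the updated coefficients $a_{t,i}$ off $\R^{2n+1}$ by the explicit $\dibar$-flat extension formula recalled in Section \ref{sec:tr} (or Whitney's theorem for finite smoothness), continuously in $t$ and $p$. Also, Gromov's theorem as usually stated applies to sections over a compact cube with $\Gamma=bD$; for a general compact domain with piecewise $\Cscr^1$ boundary and a general closed piece $\Gamma\subset bD$ one triangulates the pair $(D,\Gamma)$ and performs the extension simplex by simplex, which is the standard inductive passage explicitly mentioned in the surrounding text.

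The main obstacle is not in the deduction itself but lies upstream, in Lemma \ref{lem:ample}: everything here rests on the relation $\Rcal$ being ample in the coordinate directions, equivalently on the function $h(a,p)=\sum_j(-1)^{i-1}a_j\,b_j(p)$ being affine-linear in each derivative block $p_i$ — a feature special to the complex (holomorphic) setting and, as Remark \ref{rem:false} notes, false over $\R$. Granting that lemma, the present statement is a clean, if somewhat bookkeeping-heavy, application of the convex-integration h-principle, and I expect no essential difficulty beyond the continuity-in-parameters checks for the $\dibar$-flat re-extension.
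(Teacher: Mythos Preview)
Your proposal is correct and follows essentially the same route as the paper: the paper's proof consists of the single sentence ``Lemma \ref{lem:main} is proved by applying the h-principle on $\R^{2n+1}$ and then extending the resulting forms $\dibar$-flatly to a neighbourhood in $\C^{2n+1}$,'' together with the remark before the statement that the general $(D,\Gamma)$ case follows from the cube case by triangulation. You have simply unpacked this into its constituent steps (the Serre-fibration lift of $\beta$ to $p$ holonomic on $\Gamma$, the $\Cscr^0$-closeness coming from convex integration, and the continuous $\dibar$-flat re-extension), all of which are exactly what the paper intends.
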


In condition (iii) we use the Euclidean norm for the coefficient vector of the $1$-form $\alpha_t-\alpha$,
that is, $\alpha_t$ is uniformly $\epsilon$-close to $\alpha=\alpha_0$ on $D$ for all $t\in [0,1]$.
Note however that  in general $\alpha_1$ cannot be chosen $\Cscr^1$-close to $\alpha$.

Lemma \ref{lem:main} is proved by applying the h-principle on $\R^{2n+1}$ and
then extending the resulting forms $\dibar$-flatly to a neighbourhood in $\C^{2n+1}$.

%
%
\section{Asymptotically holomorphic and almost contact forms}\label{sec:AC}

We now introduce a general notion of an {\em almost contact form} along a closed
subset $M$ in a complex manifold $X$ (see Definition \ref{def:almostcontact}).
This is necessary since we shall be applying 
coordinate changes which are asymptotically holomorphic on $M$, 
but not necessarily holomorphic. For simplicity we discuss  scalar-valued forms, 
although the same notions apply to differential forms with values in any  
holomorphic line bundle on $X$. However,
Lemma \ref{lem:pullback} and Corollary \ref{cor:almostcontact}
only apply to scalar-valued forms and will be used locally.

A smooth differential $m$-form $\alpha$ on a complex manifold $X$ 
decomposes uniquely as the sum $\alpha=\sum_{p+q=m} \alpha^{p,q}$ of its 
$(p,q)$-homogeneous parts. In local holomorphic coordinates $z=(z_1,\ldots,z_n)$ 
on $X$ we have 
\[
	\alpha^{p,q}= \sum 
	a_{I,J} \, dz_{i_1}\wedge \cdots \wedge dz_{i_p}
	\wedge d\bar z_{j_1} \wedge \cdots \wedge d\bar z_{j_q}
\]
for some smooth coefficient functions $a_{I,J}$. In particular, for a $1$-form $\alpha$ we have
\begin{equation}\label{eq:1form}
	\alpha = 	\sum_{i=1}^n a_i dz_i + \sum_{i=1}^n b_id\bar z_i = \alpha^{1,0}+\alpha^{0,1}.
\end{equation}
The exterior derivative on $X$ splits as $d=\di+\dibar$. If $\alpha$ is a $1$-form then
\[ 
	(d\alpha)^{2,0} = \di\alpha^{1,0},\quad
	(d\alpha)^{1,1} = \di\alpha^{0,1}+\dibar\alpha^{1,0}, \quad
	(d\alpha)^{0,2} = \dibar  \alpha^{0,1}.
\]

%
%
\begin{definition}\label{def:AH}
Let $M$ be a closed subset of a complex manifold $X$.
\begin{enumerate}[\rm (a)]
\item 
A smooth $m$-form $\alpha$, defined on a neighbourhood of $M$ in $X$, is
{\em of type $(m,0)$ on $M$} if 
\[
	\alpha|_M = \alpha^{m,0}|_M.
\]
The space of such $m$-forms (on variable neighbourhoods of $M$) is denoted $\Ecal^{m,0}(M,X)$.
\item
A smooth $1$-form $\alpha$, defined on a neighbourhood of $M$ in $X$,
is {\em asymptotically holomorphic} (of order $1)$ on $M$ if
for every point $x_0\in M$ there is a holomorphic coordinate system on $X$ around $x_0$
in which $\alpha$ has the form \eqref{eq:1form} and the following conditions hold
for $i=1,\ldots, n$:
\begin{equation}\label{eq:vanishing}
	\dibar a_i(x_0)=0,\qquad b_i(x_0)=0,\qquad db_i(x_0)=0.
\end{equation}
The space of all such forms on variable neighbourhoods of $M$ is denoted $\AH^1(M,X)$.
\end{enumerate}
\end{definition}

The first two conditions in \eqref{eq:vanishing} are equivalent to $\alpha\in \Ecal^{1,0}(M,X)$
and $\dibar\alpha^{1,0}|_M=0$, so $d\alpha^{1,0}|_M=\di \alpha^{1,0}|_M$.
The last condition in  \eqref{eq:vanishing}  implies $d\alpha^{0,1}|_M=0$, but the converse
is not true since $\dibar\alpha^{0,1}|_M=0$ holds under the weaker condition 
$\frac{\di b_i}{\di \bar z_k}=\frac{\di b_k}{\di \bar z_i}$ on $M$ for all $i,k=1,\ldots,n$.
In particular, we have that
\[
	\AH^1(M,X) \subset \{\alpha \in \Ecal^{1,0}(M,X): d\alpha^{0,1}|_M=0,\ \
	\dibar \alpha^{1,0}|_M=0,\ \ 
	d\alpha\in\Ecal^{2,0}(M,X)\}.
\]

Assume now that $X$ and $Y$ are complex manifolds and $F:X\to Y$ is smooth map. 
Let $M$ be a closed subset of $X$. We say $F$ is {\em $\dibar$-flat} (or asymptotically holomorphic)
to order $k\in\N$ on $M$ if, in any pair of holomorphic coordinates on the two manifolds, 
we have that
\begin{equation}\label{eq:dibarflatext}
	D^{k-1}(\dibar F)|_M=0,
\end{equation}
where $D^{k-1}$ is the total derivative of order $k-1$ applied to the components
$\di F_i/\di \bar z_j$ of $\dibar F$. The chain rule shows that this notion is independent of the 
choice of coordinates.


The following lemma shows in particular that condition \eqref{eq:vanishing} 
defining the class $\AH^1(M,X)$ is invariant under $\dibar$-flat coordinate changes.

%
%
\begin{lemma}\label{lem:pullback}
Assume that $X$ and $Y$ are complex manifolds and $F:X\to Y$ is a $\Cscr^2$ map
which is $\dibar$-flat to order $2$ on a closed subset $M\subset X$. 
Set $M'=\overline{F(M)}\subset Y$. If $\alpha\in \AH^1(M',Y)$ then 
$F^*\alpha\in  \AH^1(M,X)$ and 
\[
	d (F^*\alpha)|_M= \di ((F^*\alpha)^{1,0}) |_M = F^*(\di \alpha^{1,0}|_{M'}).
\]
\end{lemma}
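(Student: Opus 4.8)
The plan is to work in local holomorphic coordinates and reduce everything to a coordinate computation, exploiting that $F$ is $\dibar$-flat to order $2$ on $M$, which gives us the vanishing of $\dibar F$ and its first derivatives along $M$. First I would fix a point $x_0\in M$, put $y_0=F(x_0)\in M'$, choose holomorphic coordinates $z=(z_1,\ldots,z_N)$ near $x_0$ and $w=(w_1,\ldots,w_M)$ near $y_0$ in which $\alpha$ has the normal form \eqref{eq:1form} on $Y$, i.e. $\alpha=\sum_k A_k\, dw_k + \sum_k B_k\, d\bar w_k$ with $\dibar A_k(y_0)=0$, $B_k(y_0)=0$, $dB_k(y_0)=0$. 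Writing $F$ in these coordinates with components $F_k$, we have $F^*\alpha = \sum_k (A_k\circ F)\, dF_k + \sum_k (B_k\circ F)\, d\overline{F_k}$. Splitting each $dF_k = \di F_k + \dibar F_k$ and likewise $d\overline{F_k} = \overline{\dibar F_k} + \overline{\di F_k}$, and then separating $(1,0)$ and $(0,1)$ parts, gives an explicit formula for $(F^*\alpha)^{1,0}$ and $(F^*\alpha)^{0,1}$ in terms of the $A_k, B_k, F_k$ and their derivatives.

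The next step is to verify the three conditions in \eqref{eq:vanishing} for $F^*\alpha$ at $x_0$. The $(0,1)$ part of $F^*\alpha$ collects the terms $(A_k\circ F)\,\dibar F_k$, the terms $(B_k\circ F)\,\overline{\di F_k}$, and $(B_k\circ F)\,\overline{\dibar F_k}$; but $\dibar F_k|_M=0$ kills the first and third families, and $B_k(y_0)=0$ kills the second, so $(F^*\alpha)^{0,1}(x_0)=0$, i.e. $F^*\alpha\in\Ecal^{1,0}(M,X)$. For the derivative conditions I would differentiate these expressions once: in the $(0,1)$-coefficient of $F^*\alpha$ every term contains a factor that is either $\dibar F_k$ (vanishing to first order on $M$ by $\dibar$-flatness of order $2$, so its first derivative vanishes too) or $B_k\circ F$ (vanishing to first order at $y_0$ along $M'$ in the relevant directions, pulled back), and applying the product rule one sees every summand in $d\big((F^*\alpha)^{0,1}\big)(x_0)$ still carries a surviving vanishing factor. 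This gives $b_i(x_0)=0$ and $db_i(x_0)=0$ for $F^*\alpha$. Similarly $\dibar\big((F^*\alpha)^{1,0}\big)(x_0)=0$: writing $(F^*\alpha)^{1,0}=\sum_k(A_k\circ F)\,\di F_k + \sum_k(B_k\circ F)\,\overline{\dibar F_k}$, the $\dibar$ of the first sum produces $\dibar(A_k\circ F)\wedge \di F_k$ and $(A_k\circ F)\,\dibar\di F_k$; the chain rule shows $\dibar(A_k\circ F)|_M$ involves $\dibar A_k$ composed with $F$ (which vanishes at $y_0$) times $\di F$, plus $\di A_k\circ F$ times $\dibar F$ (which vanishes on $M$), and $\dibar\di F_k = -\di\dibar F_k$ vanishes on $M$ to first order, hence its values vanish; the second sum is handled by $B_k(y_0)=0$ and $\dibar F_k|_M=0$. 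This establishes $F^*\alpha\in\AH^1(M,X)$.

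Finally, for the formula $d(F^*\alpha)|_M=\di\big((F^*\alpha)^{1,0}\big)|_M=F^*(\di\alpha^{1,0}|_{M'})$: the first equality is immediate once we know $F^*\alpha\in\AH^1(M,X)$, since for any form in that class $d=\di$ on the $(1,0)$-part along $M$ and $d(\text{the }(0,1)\text{-part})|_M=0$. For the second equality, from the formula for $(F^*\alpha)^{1,0}$ above, $\di$ of the term $(B_k\circ F)\,\overline{\dibar F_k}$ vanishes on $M$ (each summand has a factor $B_k\circ F$ or $\overline{\dibar F_k}$ vanishing there), so along $M$ we get $\di\big((F^*\alpha)^{1,0}\big)|_M = \sum_k \di\big((A_k\circ F)\,\di F_k\big)|_M = \sum_k \di(A_k\circ F)\wedge \di F_k|_M$. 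Now $\di(A_k\circ F)|_M = \sum_j (\di A_k/\di w_j\circ F)\,\di F_j|_M$ because the antiholomorphic-derivative contributions carry a $\dibar F$ factor; combining, $\di\big((F^*\alpha)^{1,0}\big)|_M = F^*\big(\sum_{j,k}(\di A_k/\di w_j)\,dw_j\wedge dw_k\big)|_M = F^*\big(\di\alpha^{1,0}\big)|_{M'}$, using that $\di\alpha^{1,0}=\sum_{j,k}(\di A_k/\di w_j)\,dw_j\wedge dw_k$ on $M'$ (the $\dibar w_j$ terms in $d\alpha^{1,0}$ being absorbed since $\dibar\alpha^{1,0}|_{M'}=0$). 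The main obstacle, and the only place requiring genuine care rather than bookkeeping, is keeping precise track in the differentiated expressions of exactly which factor supplies the needed vanishing, and making sure the order-$2$ $\dibar$-flatness hypothesis (not just order $1$) is what is used when a derivative falls on a $\dibar F_k$ term; a clean way to organize this is to note $\dibar F$ vanishes to order $2$ on $M$ in the sense of \eqref{eq:dibarflatext}, hence both $\dibar F_k$ and its first derivatives vanish on $M$, and to treat $B_k\circ F$ via its $1$-jet vanishing along $M'$ pulled back by the $\Cscr^2$ map $F$.
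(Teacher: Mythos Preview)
Your proposal is correct and follows essentially the same route as the paper's proof: fix $x_0\in M$, choose holomorphic coordinates, expand $F^*\alpha$ via $dF_k=\di F_k+\dibar F_k$ and $d\overline{F_k}=\di\overline{F_k}+\dibar\overline{F_k}$, split into $(1,0)$ and $(0,1)$ parts, and verify \eqref{eq:vanishing} term by term using $\dibar F_k|_M=0$, $d(\dibar F_k)|_M=0$, $B_k(y_0)=0$, $dB_k(y_0)=0$, and the chain rule for $\dibar(A_k\circ F)$. One small bookkeeping slip: in your listing of the $(0,1)$ part you include $(B_k\circ F)\,\overline{\dibar F_k}$, but that term is a $(1,0)$-form (as you correctly record later when you write $(F^*\alpha)^{1,0}$); the $(0,1)$ part consists only of $(A_k\circ F)\,\dibar F_k$ and $(B_k\circ F)\,\overline{\di F_k}$. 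This does not affect the argument. You also supply an explicit verification of the displayed identity $d(F^*\alpha)|_M=F^*(\di\alpha^{1,0}|_{M'})$, which the paper leaves implicit.
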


\begin{proof}
Fix a point $x_0\in M\subset X$ and let $y_0=F(x_0)\in M'\subset Y$.
By the assumption there are holomorphic coordinates $w=(w_1,\ldots,w_n)$ on a
neighborhood $U$ of $y_0$ in $Y$ such that
\[
	\alpha = 	\sum_{i=1}^n a_i\, dw_i + \sum_{i=1}^n b_i\, d\overline w_i = \alpha^{1,0}+\alpha^{0,1},
\]
where the coefficients satisfy the following conditions (see \eqref{eq:vanishing}): 
\[
	\dibar a_i(y_0)=0,\qquad b_i(y_0)=0,\qquad db_i(y_0)=0.
\]
The pullback form $\wt\alpha=F^*\alpha$ on $F^{-1}(U)\subset X$ equals
\[
\begin{aligned}
	\wt\alpha &= \sum_{i=1}^n \left[ (a_i\circ F)\, dF_i + (b_i\circ F)\, d\overline {F}_i\right] \\
	&= \sum_{i=1}^n \left[ (a_i\circ F)\, \di F_i + (b_i\circ F) \,\di \overline F_i\right] 
	+  \sum_{i=1}^n \left[ (a_i\circ F)\, \dibar F_i + (b_i\circ F) \, \dibar \, \overline F_i\right] \\
	& =  \wt\alpha^{1,0} + \wt\alpha^{\, 0,1}.
\end{aligned}
\]
At the point $x_0\in M$ we have that $b_i\circ F(x_0)=0$ and $\dibar F_i(x_0)=0$  for all $i$, 
and hence
\[
	\wt\alpha^{1,0}(x_0) = \sum_{i=1}^n  a_i(y_0) \di F_i (x_0) = F^*(\alpha^{1,0})(x_0), 
	\qquad \wt\alpha^{0,1}(x_0)=0.
\]
Furthermore, since $db_i(y_0)=0$ and $d(\dibar F_i)(x_0)=0$ for all $i$, a simple calculation
shows that the coefficients of $\wt\alpha^{0,1}$ in any holomorphic coordinate system
on $X$ around $x_0$ vanish to the second order at $x_0$.
Finally, consider the $(1,1)$-form
\[
	\dibar\, \wt\alpha^{1,0} = 
	\sum_{i=1}^n \left( 
		\, \dibar(a_i\circ F)\wedge \di F_i
		+ (a_i\circ F) \dibar\di F_i
        		+ \dibar (b_i\circ F) \wedge  \di \overline F_i
		+ (b_i\circ F) \dibar\di \overline F_i 
		\right).
\]
We have that
\[
	\dibar(a_i\circ F)(x_0) = 
		\sum_{k=1}^m \left( \frac{\di a_i}{\di w_k}(y_0) \dibar F_k(x_0) + 
				\frac{\di a_i}{\di \overline w_k}(y_0) \dibar (\overline F_k)(x_0) \right) 
				=0,
\]
so the first term in the above sum for $\dibar\, \wt\alpha^{1,0}$ vanishes at $x_0$.
The other terms vanish as well since $F$ is $\dibar$-flat to the second order at $x_0$.
This shows that $\wt\alpha=F^*\alpha$ is asymptotically holomorphic at $x_0$. 
Since the point $x_0\in M$ was arbitrary, this completes the proof.
\end{proof}

%
%
\begin{definition}\label{def:almostcontact}
Let $X^{2n+1}$ be a complex manifold and $M$ be a closed subset of $X$.
\begin{enumerate}[\rm (a)]
\item 
A pair $(\alpha,\beta)$ with $\alpha\in \Ecal^{1,0}(M,X)$ and $\beta\in \Ecal^{2,0}(M,X)$
(see Definition \ref{def:AH}) is a {\em formal complex contact structure on $M$} if 
\begin{equation}\label{eq:formalcontact}
	\alpha\wedge \beta^n = 
	\alpha^{1,0}\wedge (\beta^{2,0})^n \ne 0  \quad \text{holds at every point of $M$}.
\end{equation}
We denote by $\Contf(M,X)$ the space of formal contact structures on $M\subset X$.
\vspace{1mm}
\item
An asymptotically holomorphic $1$-form $\alpha\in \AH^1(M,X)$ 
(see Definition \ref{def:AH} (b)) is an {\em almost contact form on $M$} if 
\begin{equation}\label{eq:almostcontact2}
	\alpha \wedge (d\alpha)^n  \ne 0  \ \ \text{holds at every point of $M$}.
\end{equation}
We denote the space of almost contact forms on $M$ by $\AC(M,X)$.
\end{enumerate}
\end{definition}

\begin{remark}\label{rem:observe}
Note that for every $(\alpha,\beta)\in \Contf(M,X)$ the pair $(\alpha^{1,0},\beta^{2,0})$
is a formal contact structure on an open neighbourhood of $M$ in $X$ 
(since $\alpha^{1,0}\wedge (\beta^{2,0})^n \ne 0$ is an open condition). 
Likewise, $\AC(M,X)$ is an open subset of  $\AH^1(M,X)$ in the fine $\Cscr^1$ topology
on $M$. For $\alpha\in \AH^1(M)$, the almost contact condition \eqref{eq:almostcontact2} 
is equivalent to
\[
	\alpha^{1,0} \wedge (d\alpha^{1,0})^n =  \alpha^{1,0} \wedge (\di \alpha^{1,0})^n 
	\ne 0  \quad \text{on $M$}.
\]
Hence, this notion generalises the one introduced in  
Sect.\ \ref{sec:tr}; see in particular \eqref{eq:almostcontact}. 
\qed\end{remark}

The next corollary follows immediately from the definitions and Lemma \ref{lem:pullback}.

%
%
\begin{corollary}\label{cor:almostcontact}
Suppose that $X$ and $Y$ are complex manifolds of dimension $2n+1$, $M$ is a closed subset of $X$,
and $F\colon X\to Y$ is a diffeomorphism which is $\dibar$-flat to order $2$ on $M$.
\begin{enumerate}[\rm (a)]
\item If $(\alpha,\beta)\in \Contf(F(M),Y)$ then $(F^*\alpha,F^*\beta)\in\Contf(M,X)$.
\item If $\alpha\in \AC(F(M),Y)$ then $F^*\alpha\in \AC(M,X)$.
\end{enumerate}
\end{corollary}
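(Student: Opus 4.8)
The plan is to deduce Corollary \ref{cor:almostcontact} directly from Lemma \ref{lem:pullback} together with the observations collected in Remark \ref{rem:observe}. The two statements (a) and (b) are both local in nature, so it suffices to verify the required algebraic identities pointwise on $M$ after passing to holomorphic coordinates, and then invoke the pointwise conclusions of Lemma \ref{lem:pullback}.

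For part (a), assume $(\alpha,\beta)\in\Contf(F(M),Y)$, so $\alpha\in\Ecal^{1,0}(F(M),Y)$, $\beta\in\Ecal^{2,0}(F(M),Y)$, and $\alpha^{1,0}\wedge(\beta^{2,0})^n\ne 0$ on $F(M)$. First I would check that $F^*\alpha\in\Ecal^{1,0}(M,X)$ and $F^*\beta\in\Ecal^{2,0}(M,X)$: this follows because $F$ is $\dibar$-flat to order $2$ on $M$, hence at every point $x_0\in M$ we have $\dibar F_i(x_0)=0$, so the pullback of a $(p,0)$-form restricted to $M$ has no antiholomorphic components (this is exactly the computation performed for $1$-forms inside the proof of Lemma \ref{lem:pullback}, and the same argument applies to $(2,0)$-forms since $F^*(dw_i\wedge dw_j)=dF_i\wedge dF_j$ and $dF_i|_M=\di F_i|_M$). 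Then at any $x_0\in M$, writing $y_0=F(x_0)$, we get $(F^*\alpha)^{1,0}(x_0)=F^*(\alpha^{1,0})(x_0)$ and $(F^*\beta)^{2,0}(x_0)=F^*(\beta^{2,0})(x_0)$, so
\[
	(F^*\alpha)^{1,0}\wedge\bigl((F^*\beta)^{2,0}\bigr)^n(x_0)
	= F^*\!\left(\alpha^{1,0}\wedge(\beta^{2,0})^n\right)(x_0).
\]
Since $F$ is a diffeomorphism, $dF(x_0)$ is an invertible $\R$-linear map, and because $dF_i(x_0)=\di F_i(x_0)$ the induced map on $(1,0)$-covectors at $y_0$ is a $\C$-linear isomorphism onto the $(1,0)$-covectors at $x_0$; hence it sends the nonvanishing $(2n+1,0)$-covector $\alpha^{1,0}\wedge(\beta^{2,0})^n(y_0)$ to a nonzero multiple, giving $(F^*\alpha)^{1,0}\wedge((F^*\beta)^{2,0})^n(x_0)\ne 0$. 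As $x_0\in M$ was arbitrary, $(F^*\alpha,F^*\beta)\in\Contf(M,X)$.

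For part (b), assume $\alpha\in\AC(F(M),Y)$, i.e.\ $\alpha\in\AH^1(F(M),Y)$ with $\alpha\wedge(d\alpha)^n\ne 0$ on $F(M)$. Lemma \ref{lem:pullback} immediately gives $F^*\alpha\in\AH^1(M,X)$ together with the identity $d(F^*\alpha)|_M=\di((F^*\alpha)^{1,0})|_M=F^*(\di\alpha^{1,0}|_{F(M)})$. By Remark \ref{rem:observe} the almost contact condition for $\alpha$ is equivalent to $\alpha^{1,0}\wedge(\di\alpha^{1,0})^n\ne 0$ on $F(M)$, and likewise for $F^*\alpha$ it is equivalent to $(F^*\alpha)^{1,0}\wedge(\di(F^*\alpha)^{1,0})^n\ne 0$ on $M$. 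Using the identity from Lemma \ref{lem:pullback} together with $(F^*\alpha)^{1,0}(x_0)=F^*(\alpha^{1,0})(x_0)$ at each $x_0\in M$, the left-hand side equals $F^*\!\left(\alpha^{1,0}\wedge(\di\alpha^{1,0})^n\right)(x_0)$, which is again a nonzero multiple of $\alpha^{1,0}\wedge(\di\alpha^{1,0})^n(y_0)\ne 0$ by the same $\C$-linear isomorphism argument as above. Hence $F^*\alpha\in\AC(M,X)$.

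I do not expect any real obstacle here: the corollary is essentially a bookkeeping consequence of Lemma \ref{lem:pullback}. The only point requiring a little care is the step identifying, at points of $M$, the pullback of a homogeneous $(p,0)$-form with a $(p,0)$-form and then using that $dF(x_0)$ acts $\C$-linearly on $(1,0)$-covectors because $\dibar F(x_0)=0$ — this is precisely where $\dibar$-flatness (to order $2$, which is more than enough for the pointwise statement; the order-$2$ flatness is what Lemma \ref{lem:pullback} genuinely needs for the derivative identity) and the fact that $F$ is a diffeomorphism are both used, and it is the reason the nonvanishing of the top forms is preserved.
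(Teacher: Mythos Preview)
Your proposal is correct and is precisely the approach the paper takes: the corollary is stated as an immediate consequence of the definitions and Lemma~\ref{lem:pullback}, and your write-up simply spells out the bookkeeping. The only addition you make explicit---that at points of $M$ the differential $dF$ acts $\C$-linearly on $(1,0)$-covectors because $\dibar F|_M=0$, so pullback preserves both the $(p,0)$-type and the nonvanishing of the top form---is exactly the intended content of ``follows immediately.''
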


%
%

\section{Complex contact structures near totally real submanifolds}\label{sec:tr2}

In this section we prove the following parametric h-principle for complex contact structures 
along any totally real submanifold $M$ of class $\Cscr^2$ in a complex manifold $X^{2n+1}$.

%
%
\begin{theorem} \label{th:trparametric}
Let $M$ be a topologically closed totally real submanifold of class $\Cscr^2$ (possibly with boundary) 
in a complex manifold $X^{2n+1}$.  Assume that $P$ is a compact Hausdorff space and 
$Q\subset P$ is a closed subspace. Let $(\alpha_p,\beta_p) \in \Contf(X)$ $(p\in P)$ 
be a continuous family of formal complex contact structures with values in a holomorphic 
line bundle $L$ on $X$ (see Definition \ref{def:almostcontact}) such that for every $p\in Q$,
$\alpha_p\in \Conth(X)$ and $\beta_p=d\alpha_p$ on $\ker \alpha_p$. 
Then, there exist a Stein neighbourhood $\Omega\subset X$ of $M$ and a homotopy 
$(\alpha_{p,t},\beta_{p,t}) \in \Contf(X)$ $(p\in P,\ t\in [0,1])$ satisfying the following conditions.
\begin{enumerate}[\rm (a)]
\item $(\alpha_{p,0},\beta_{p,0})=(\alpha_p,\beta_p)$ for all $p\in P$.
\item The homotopy is fixed for all $p\in Q$.
\item $\alpha_{p,1}|_\Omega \in \Conth(\Omega)$ and $\beta_{p,1} = d\alpha_{p,1}$
on $\ker \alpha_1|_\Omega$ for all $p\in P$.
\end{enumerate}
\end{theorem}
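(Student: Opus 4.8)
The plan is to prove Theorem \ref{th:trparametric} by reducing the global problem to the local, parametric h-principle on handles (Lemma \ref{lem:main}) via a triangulation of $M$ and an induction on skeleta, using the asymptotic-holomorphicity machinery of Section \ref{sec:AC} to patch local model computations into the ambient complex manifold $X$. First I would fix a $\Cscr^2$ triangulation of $M$ such that each closed simplex $\sigma$ lies in a coordinate chart $U_\sigma\subset X$ in which $M\cap U_\sigma$ is totally real and, after a $\Cscr^2$-diffeomorphism that is $\dibar$-flat to order $2$ along $M$, is carried into the standard real subspace $\R^{2n+1}\subset\C^{2n+1}$ (this is where one invokes the standard fact that a totally real submanifold of class $\Cscr^2$ is locally, up to a $\dibar$-flat diffeomorphism, straightened to $\R^{2n+1}$; Corollary \ref{cor:almostcontact} then guarantees that pulling back formal/almost contact data along such maps preserves the relevant classes). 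Since $L$ is a holomorphic line bundle over a Stein neighbourhood of $M$ (a totally real submanifold has a basis of Stein — Grauert tube — neighbourhoods, and by the Oka--Grauert principle we may trivialize $L$ over each chart), we may work with scalar-valued forms locally, as flagged in the remark preceding Lemma \ref{lem:pullback}.

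The induction is over the dimension $k$ of skeleta $M^{(k)}$ of the triangulation. The inductive hypothesis is: after a homotopy in $\Contf(X)$ (fixed over $Q$ and supported near $M^{(k-1)}$), the family $(\alpha_p,\beta_p)$ has been deformed so that each $\alpha_p$ is an \emph{almost contact form} (in the sense of Definition \ref{def:almostcontact}(b), i.e. $\alpha_p\in\AC(M^{(k-1)},X)$) on a neighbourhood of the $(k-1)$-skeleton, with $\beta_p=d\alpha_p$ there, and still over $Q$ the forms are honest holomorphic contact forms. To pass from $k-1$ to $k$, I treat each $k$-simplex $\sigma$ of $M$: in the straightened local model $\sigma\subset D\subset\R^{2n+1}\subset\C^{2n+1}$ with $\Gamma=\partial\sigma\cap M^{(k-1)}$ (where the data is already holonomic, $\beta_p=d\alpha_p$), I extend the coefficient functions $\dibar$-flatly to a neighbourhood in $\C^{2n+1}$ as in Section \ref{sec:tr}, and apply Lemma \ref{lem:main} (its parametric form, with parameter space $P$, $Q$) to obtain a homotopy $(\alpha_{p,t},\beta_{p,t})$ which is fixed over $Q$ and over $\Gamma$, stays a formal contact structure throughout, keeps $\alpha_{p,t}$ uniformly $\epsilon$-close to $\alpha_p$, and at $t=1$ satisfies $\beta_{p,1}=d\alpha_{p,1}$ on $\sigma$, i.e. makes $\alpha_{p,1}$ almost contact on $\sigma$. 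Ampleness (Lemma \ref{lem:ample}) is exactly what makes this $k$-cell extension work; that the data extends as a \emph{formal} structure over the whole cell is automatic since $(\alpha_p,\beta_p)$ is globally formal on $X$. Carrying this out for all $k$-simplices, using a partition of unity subordinate to the cover $\{U_\sigma\}$ to glue the local homotopies and transporting everything back to $X$ via the $\dibar$-flat charts (Corollary \ref{cor:almostcontact} ensures the almost/formal contact conditions survive), advances the induction to the $k$-skeleton; after $k=2n+1$ steps we obtain a homotopy, fixed over $Q$, from $(\alpha_p,\beta_p)$ to $(\alpha_p',\beta_p')$ with $\alpha_p'\in\AC(M,X)$ and $\beta_p'=d\alpha_p'$ along $M$.

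The final step is to pass from almost contact forms \emph{along} $M$ to holomorphic contact forms in a neighbourhood. Here I approximate each $\alpha_p'$ in the fine $\Cscr^1$ topology on $M$ by a holomorphic $1$-form $\wt\alpha_p$ on a common Stein (Grauert tube) neighbourhood $\Omega$ of $M$, continuously in $p\in P$ and without changing the already-holomorphic family over $Q$; this uses a parametric Whitney-type approximation of $\dibar$-flat forms by holomorphic ones on totally real submanifolds (the same tool underlying Lemma \ref{lem:main}, combined with the existence of Stein tube neighbourhoods). Since $\AC(M,X)$ is open in $\AH^1(M,X)$ in the fine $\Cscr^1$ topology (see the remark after Definition \ref{def:almostcontact}), a sufficiently close approximation yields $\wt\alpha_p\wedge(d\wt\alpha_p)^n\ne0$ on $\Omega$, i.e. $\wt\alpha_p\in\Conth(\Omega)$, and the linear homotopy $(1-s)\alpha_p'+s\wt\alpha_p$ (on $M$, extended $\dibar$-flatly) stays almost contact for small $\epsilon$, providing the concluding leg of the homotopy $(\alpha_{p,t},\beta_{p,t})$ with $\beta_{p,1}=d\alpha_{p,1}$ on $\ker\alpha_{p,1}|_\Omega$. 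The main obstacle I anticipate is bookkeeping the compatibility of the local handle-homotopies across the triangulation in the \emph{parametric} and \emph{relative} (over $Q$) setting while staying inside $\Contf(X)$ globally — i.e. ensuring the deformations on adjacent simplices and on lower skeleta agree and glue to a genuine homotopy on all of $X$ — rather than any single analytic estimate; the ampleness lemma does the real geometric work, but the inductive gluing and the passage between the ``along $M$'' and ``near $M$'' pictures require care with neighbourhoods and with the line bundle $L$ (handled by Oka--Grauert triviality over Stein charts).
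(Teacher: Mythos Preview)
Your proposal is correct and follows essentially the same route as the paper's proof: reduce to almost contact forms along $M$ by an induction over the skeleta of a triangulation, applying the parametric Lemma \ref{lem:main} on each cell after straightening via a $\dibar$-flat chart (the paper packages this step as Lemma \ref{lem:hprinciple-AC} and invokes Proposition \ref{prop:trextension} for the straightening), and then approximate in the fine $\Cscr^1$ topology by holomorphic $1$-forms on a Grauert tube and connect via the linear homotopy. One small caution: the local homotopies on cells are \emph{fixed on $\Gamma=bC^{k+1}$}, so they assemble over the skeleton without any averaging of the forms; your remark about gluing ``using a partition of unity subordinate to $\{U_\sigma\}$'' should be read (as the paper does) as a cut-off in the \emph{time parameter} to extend the homotopy from a neighbourhood of $M$ to all of $X$, not as a convex combination of the $1$-forms themselves.
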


This result subsumes the basic h-principle given by Theorem \ref{th:trbasic}.
The proof is based on Lemma \ref{lem:main} and the results from Sect.\ \ref{sec:AC},
along with some well known results concerning totally real submanifolds which we now recall. 

Assume that $M$ is a topologically closed totally real submanifold of class 
$\Cscr^k$ $(k\in\N)$, possibly with boundary, in a complex manifold $X$. 
Every function $f\in\Cscr^k(M)$ extends to a function $F\in \Cscr^k(X)$ which is $\Cscr^\infty$ 
smooth in $X\setminus M$ and $\dibar$-flat to order $k$ on $M$ (cf.\ \eqref{eq:dibarflatext}):
\[
	D^{k-1}(\dibar F)|_M=0.
\]
(See \cite[Lemma 4.3]{HormanderWermer1968} or \cite[Lemma 4, p.\ 148]{Boggess1991}.)
The analogous extension theorem holds for maps $f\colon M\to Y$ of class $\Cscr^k$ 
to an arbitrary complex manifold  --- such $f$ extends to a map $F:U\to Y$ on an 
open tubular Stein neighbourhood $U\subset X$ of $M$
such that $F$ is $\dibar$-flat to order $k$ on $M$.
Indeed, the graph of $f$ admits a Stein neighbourhood in $X\times Y$
according to Grauert \cite{Grauert1958AM}, so the proof reduces to the case of functions
by applying the embedding theorem for Stein manifolds into Euclidean spaces and 
the Docquier-Grauert tubular neighbourhood theorem \cite{DocquierGrauert1960}. 
(See e.g.\ \cite[proof of Corollary 3.5.6]{Forstneric2017E}.)

Let $T^\C M$ denote the complexified tangent bundle of $M$, considered as a complex
vector subbundle of $TX|_M$ of rank $m=\dim_\R M$.
The quotient bundle $\nu_M=TX|_{M}/T^\C M$ is the 
{\em complex normal bundle} of $M$ in $X$; it can be realised as a
complex vector subbundle of $TX|_M$ such that $TX|_M=T^\C M \oplus \nu_M$.
Given a diffeomorphism $f\colon M_0 \to M_1$ between totally real submanifolds
$M_0\subset X$ and $M_1\subset Y$, where $X$ and $Y$ are complex manifolds 
of the same dimension, we say that the complex normal bundles
$\pi_i\colon \nu_i\to M_i$ $(i=0,1)$ are isomorphic over $f$ if there exists
an isomorphism of complex vector bundles $\phi\colon \nu_0 \to\nu_1$
satisfying $\pi_1\circ \phi=f\circ\pi_0$. (We refer to \cite[Sect.\ 2]{ForstnericLowOvrelid2001}
for further details on this subject.)

The following result is implicitly contained in \cite[proof of Theorem 1.2]{ForstnericLowOvrelid2001}.

%
%
\begin{proposition}\label{prop:trextension}
Let $X$ and $Y$ be complex manifolds of the same dimension $n$, and let $f:M_0\to M_1$
be a diffeomorphism of class $\Cscr^k$ $(k\in \N)$ between $\Cscr^k$ totally real
submanifolds $M_0\subset X$ and $M_1\subset  Y$. If the complex normal bundles
$\pi_i\colon \nu_i\to M_i$ $(i=0,1)$ are isomorphic over $f$, then $f$ extends
to a $\Cscr^k$ diffeomorphism $F\colon U\to F(U)\subset Y$ on a neighbourhood $U\subset X$ of 
$M_0$ such that $F$ is $\dibar$-flat to order $k$ on $M$. Such extension 
always exists if $M_0$ (and hence $M_1$) is contractible, or if $M_0$ has maximal dimension $n$.
\end{proposition}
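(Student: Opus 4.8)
The plan is to deduce Proposition~\ref{prop:trextension} from the construction in \cite[proof of Theorem 1.2]{ForstnericLowOvrelid2001}, which produces a $\dibar$-flat diffeomorphic extension of a given boundary diffeomorphism between totally real submanifolds, together with a local-to-global patching argument. First I would reduce to the local situation: cover $M_0$ by finitely many coordinate charts $U_j\subset X$ in which $M_0$ is realised, after a $\Cscr^k$ change of coordinates, as a piece of $\R^m\subset\C^m\subset\C^n$, and similarly for $M_1\subset Y$; on each chart the differential $df$ together with the given complex normal bundle isomorphism $\phi$ prescribes a complex-linear isomorphism $T_xX\to T_{f(x)}Y$ depending $\Cscr^{k-1}$-smoothly on $x\in M_0$, namely $df$ on $T^\C M_0$ and $\phi$ on $\nu_0$. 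The point is that such a field of complex-linear isomorphisms along $M_0$ is exactly the first-order data needed to begin a $\dibar$-flat extension: set $F^{(1)}$ to agree with $f$ on $M_0$ and to have this prescribed complex-linear differential along $M_0$, then correct the higher-order $\dibar$-derivatives to zero by adding a term vanishing to order $k$ on $M_0$, exactly as in the scalar $\dibar$-flat extension recalled just before the statement. This gives a local $\Cscr^k$ map $F_j$ on a neighbourhood of $M_0\cap U_j$ which is $\dibar$-flat to order $k$ on $M_0$ and a local diffeomorphism near $M_0$ (shrink the neighbourhood so that $dF_j$ stays invertible).

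Next I would patch the local extensions into a global one. The maps $F_j$ all restrict to $f$ on $M_0$ and all have the \emph{same} complex-linear differential along $M_0$ (the one determined by $df$ and $\phi$), hence $F_j-F_k$ vanishes to order $\ge 1$ along $M_0$ with, in addition, all $\dibar$-derivatives of order $<k$ vanishing there; using a $\Cscr^\infty$ partition of unity $\{\chi_j\}$ subordinate to the cover, the glued map $F=\sum_j \chi_j F_j$ still equals $f$ on $M_0$, still has the prescribed differential along $M_0$ (since $\sum_j\chi_j\equiv 1$ near $M_0$ and the $F_j$ agree to first order there), and is still $\dibar$-flat to order $k$ on $M_0$ (the cutoffs are holomorphic to order $0$ only, but since each $F_j$ is already $\dibar$-flat to order $k$ and they agree to high order along $M_0$, the Leibniz rule forces the $\dibar$-derivatives of the sum of order $<k$ to vanish on $M_0$). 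Because $dF$ is invertible along $M_0$, after shrinking to a tubular neighbourhood $U$ of $M_0$ the map $F\colon U\to F(U)\subset Y$ is a $\Cscr^k$ diffeomorphism onto its image; the image is a neighbourhood of $M_1$ after a further shrinking, using properness of $F|_{M_0}=f$ and the inverse function theorem along $M_0$.

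Finally I would dispatch the two special cases. If $M_0$ is contractible, every complex vector bundle over it is trivial, so $\nu_0$ and $\nu_1$ (of the same rank) are isomorphic, and an isomorphism covering $f$ exists; likewise if $M_0$ has maximal dimension $n$, then $T^\C M_0=TX|_{M_0}$ and $\nu_0=0$ (and the same for $M_1$), so the hypothesis is vacuous and $df$ alone supplies the complex-linear differential along $M_0$. The main obstacle is the patching step: one must verify carefully that gluing with a partition of unity does not destroy $\dibar$-flatness to order $k$, which requires that the local pieces agree to sufficiently high order along $M_0$ --- this is why it is essential that all the $F_j$ are built from the \emph{same} prescribed first-order data and that each is individually $\dibar$-flat to order $k$, so that the Leibniz expansion of $\dibar\!\left(\sum_j\chi_j(F_j-F_{j_0})\right)$ has every term vanishing on $M_0$ to the required order. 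Once this is in place, everything else is the standard tubular-neighbourhood and inverse-function-theorem machinery.
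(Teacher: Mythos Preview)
The paper does not actually prove this proposition; it merely records that the result is implicit in \cite[proof of Theorem~1.2]{ForstnericLowOvrelid2001}. Your outline is therefore more detailed than the paper's own treatment, and its overall shape --- prescribe the complex-linear differential $df\oplus\phi$ along $M_0$, extend $\dibar$-flatly in local charts, patch, then apply the inverse function theorem --- is the right one. There is, however, a genuine gap in the patching step when $k\ge 3$ and $m=\dim_\R M_0<n$.

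Your claim that the local pieces $F_j$ ``agree to high order along $M_0$'' is not justified. Sharing the same value and first differential on $M_0$, together with individual $\dibar$-flatness to order $k$, forces the mixed and antiholomorphic second derivatives of $G=F_j-F_{j_0}$ to vanish on $M_0$, but leaves the pure holomorphic second derivatives $\partial^2 G/\partial z_l\,\partial z_j$ in the complex normal directions (both indices $>m$) unconstrained; take $M_0=\R\times\{0\}\subset\C^2$ and $G=z_2^2$. Hence in the Leibniz expansion the term $(\dibar\chi_j)\cdot D^2G$ need not vanish on $M_0$, and $\dibar$-flatness to order $k$ is lost for $k\ge 3$. (Your argument is correct as written only for $k\le 2$, which is in fact all the paper ever uses.) An easy repair: take the cutoffs themselves $\dibar$-flat on $M_0$ --- extend a partition of unity on $M_0$ to $\dibar$-flat functions on $X$ and renormalise --- so that $\dibar\chi_j$ vanishes to order $k-1$ on $M_0$ and every Leibniz term is controlled.

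A second, smaller issue: the expression $\sum_j\chi_jF_j$ is meaningless for maps into a manifold $Y$. You should first embed a Stein neighbourhood of $M_1$ holomorphically into some $\C^N$ and use a holomorphic retraction, exactly as the paper does in the paragraph preceding the proposition.
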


%
%
%
%
\begin{proof}[Proof of Theorem \ref{th:trparametric}]
For simplicity of exposition we consider the nonparametric case (with $P$ a singleton and 
$Q=\varnothing$); the parametric case follows by the same arguments.

We proceed in two steps. In the first step, we deform the given formal contact structure
to one that is almost contact on $M$ (see Definition \ref{def:almostcontact}).
Here we use the h-principle furnished by Lemma \ref{lem:main} and the results in
Sect.\ \ref{sec:AC}. In the second step we approximate the almost contact form on $M$ 
by a holomorphic contact form on a neighbourhood of $M$. 

The first step is accomplished  by the following lemma. 

%
%
\begin{lemma}[H-principle for almost contact structures on totally real submanifolds] \label{lem:hprinciple-AC} 
Let $M$ be a closed totally real submanifold of class $\Cscr^2$ (possibly with boundary) 
in a complex manifold $X^{2n+1}$.  
Given $(\alpha_0,\beta_0)\in \Contf(X)$, there is a homotopy $(\alpha_t,\beta_t)\in \Contf(M,X)$
$(t\in[0,1])$ such that $(\alpha_0,\beta_0)$ is the given initial pair,
$\alpha_1\in \AC(M,X)$, and $\beta_1=d\alpha_1=\di \alpha_1$ on $(\ker d\alpha_1)|_M$.
If $M$ has nonempty piecewise $\Cscr^1$ boundary $bM$ and we have
$\alpha_0|_{bM} \in \AC(bM,X)$ and $\beta_0=d\alpha_0$ on  $(\ker d\alpha_0)|_{bM}$, then 
the homotopy $(\alpha_t,\beta_t)$ may be chosen fixed on $bM$.
The analogous result holds in the parametric case.
\end{lemma}

Assume for a moment that Lemma \ref{lem:hprinciple-AC} holds and let us
complete the proof of Theorem \ref{th:trparametric}.
In view of Remark \ref{rem:observe},
there is a neighbourhood $U\subset X$ of $M$ such that 
$(\alpha_t^{1,0},\beta_t^{2,0})\in \Contf(U)$ for $t\in[0,1]$.
Hence, we may assume that $\alpha_t=\alpha_t^{1,0}$ and $\beta_t=\beta_t^{2,0}$
in $U$. By the hypothesis we also have $\alpha_1\in \AC(M,X)$ and  
$\beta_1=\di \alpha_1$ on $(\ker\alpha_1)|_M$. By a homotopic deformation (shrinking $U$ if necessary) 
we may assume that $\beta_1=\di \alpha_1$ on $(\ker\alpha_1)|_U$.  

In the next step, we find a smaller neighbourhood $U' \subset U$ of $M$ and a homotopy
in $\Contf(U')$ from $(\alpha_1,\di \alpha_1)$ to $(\wt\alpha,d\wt\alpha)$ where 
$\wt\alpha\in \Conth(U')$. This can be done by approximating $\alpha_1$ sufficiently closely in 
the fine $\Cscr^1$ topology on $M$ by a holomorphic $1$-form $\wt\alpha$ defined in a 
neighbourhood of $M$ and setting
\[
	\wt\alpha_t=(1-t)\alpha_1 + t\wt\alpha, \qquad 
	\wt\beta_t = \di \wt\alpha_t = (1-t)\di \alpha_1 + t d\wt\alpha\ \ \text{on}\ \ \ker \wt\alpha_t 
\]
for $t\in[0,1]$. Holomorphic approximation results for functions in the fine topology on 
totally real manifolds are well known, see for instance 
Manne, \O{v}relid and Wold \cite{ManneWoldOvrelid2011}
and the survey \cite{FFW2020}. These results also apply to sections 
of holomorphic vector bundles as shown in  \cite[proof of Theorem 2.8.4]{Forstneric2017E}.
Finally, the homotopy in $\Contf(U')$ from $(\alpha_0,\beta_0)$ to 
$(\wt \alpha,d\wt\alpha)$, constructed above, can be extended to all of $X$ in a standard way 
by using a cut-off function on $X$ in the parameter  of the homotopy,
thereby yielding a homotopy in $\Contf(X)$ which equals the given one 
on a smaller Stein neighbourhood $\Omega \subset U'$ of $M$ and it agrees with 
$(\alpha_0,\beta_0)$ on $X\setminus U'$.

Assuming that Lemma \ref{lem:hprinciple-AC} holds, this completes the proof 
of Theorem \ref{th:trparametric}. As said before, the parametric case follows the same pattern 
and we omit the details. 
\end{proof}

\begin{proof}[Proof of Lemma \ref{lem:hprinciple-AC}]
Choose a triangulation of $M$ and let $M_k$ denote its $k$-dimensional skeleton, i.e.,
the union of all cells of dimension at most $k$.
Assume inductively that for some $k<m=\dim M$ we have already found a homotopy
in $\Contf(M,X)$ from $(\alpha_0,\beta_0)$ to $(\alpha,\beta)\in \Contf(M,X)$ satisfying
the following conditions:
\[
	\alpha\in \AC(M_k,X),\quad \beta=d\alpha \ \ \text{on}\ (\ker \alpha)|_{M_k},\quad
	\alpha\wedge (d\alpha)^n|_{M_k} \ne 0.
\]
The inductive step amounts to deforming $(\alpha,\beta)$ by a homotopy in $\Contf(M,X)$
that is fixed on $M_k$ to another pair $(\wt \alpha,\wt \beta)\in \Contf(M,X)$ such that 
\[
	\wt\alpha\in \AC(M_{k+1},X),\quad \wt \beta=d\wt\alpha\ \ \text{on}\ (\ker \wt \alpha)|_{M_{k+1}},
	\quad \wt\alpha\wedge (d\wt\alpha)^n|_{M_{k+1}} \ne 0.
\]
This can be done by applying Lemma \ref{lem:main} successively on each $(k+1)$-dimensional
cell $C^{k+1}$ in the given triangulation of $M$; we now explain the details. 

Let $L\to X$ be the holomorphic line bundle such that $\alpha_0,\beta_0$ have values in $L$.
Note that $L$ is holomorphically trivial over a neighbourhood of the cell $C^{k+1}$
by the Oka-Grauert principle, so we may consider all 
our $L$-valued differential forms to be scalar-valued there.
The cell $C^{k+1}$ is diffeomorphic
to a compact contractible domain $D^{k+1}\subset \R^{k+1}$ as in Lemma \ref{lem:main}.
We identify $\R^{k+1}$ with $\R^{k+1}\times \{0\}^{2n-k} \subset \R^{2n+1}\subset \C^{2n+1}$.
Since $M$ is totally real and of class $\Cscr^2$, any diffeomorphism 
$F\colon C^{k+1} \to D^{k+1}$ of class $\Cscr^2$ extends to a diffeomorphism $F$
from a neighbourhood of $C^{k+1}$ in $X$ onto a neighbourhood of $D^{k+1}$ in $\C^{2n+1}$
which is $\dibar$-flat to order $2$ on $C^{k+1}$ (see Proposition \ref{prop:trextension}). 
The inverse $G=F^{-1}$ is then $\dibar$-flat to order $2$ on $D^{k+1}$. 
By Corollary \ref{cor:almostcontact} we have that
\begin{enumerate}[\rm (i)]
\item $(G^*\alpha,G^*\beta)\in \Contf(D_{k+1},\C^{2n+1})$, 
\item $G^*\alpha \in \AC(bD^{k+1},\C^{2n+1})$, and
\item $G^*\beta=d (G^*\alpha)$ holds on $\ker(G^*\alpha)$ at all points of $bD^{k+1}$.
\end{enumerate}
By Lemma \ref{lem:main} we can deform $(G^*\alpha,G^*\beta)$ 
by a homotopy in $\Contf(D^{k+1},\C^{2n+1})$ that is fixed on $bD^{k+1}$ to an element 
$(\alpha',\beta')\in \Contf(D^{k+1},\C^{2n+1})$ such that $\alpha'\in \AC(D^{k+1},\C^{2n+1})$ and 
$\beta'=d\alpha'$ on $(\ker \alpha')|_{D_{k+1}}$. 
(Lemma  \ref{lem:main} applies verbatim if $k+1=m=2n+1$. If $k+1<2n+1$, 
we can apply it on $D^{k+1}\times r\D^{2n-k}$ for some $r>0$, 
where $\D^{2n-k}$ is the closed ball around the origin in $\R^{2n-k}$.
We can extend $G^*\alpha$ to an element of $\AH^1(D^{k+1}\times r\D^{2n-k},\C^{2n+1})$
whose restriction to $bD^{k+1}\times r\D^{2n-k}$ belongs to 
$\AC(bD^{k+1}\times r\D^{2n-k},\C^{2n+1})$ and apply Lemma  \ref{lem:main} to this extension.)
By Corollary \ref{cor:almostcontact} we have $F^*\alpha' \in \AC(C^{k+1},X)$ and
$d(F^*\alpha')=F^*\beta'$ on $\ker(F^*\alpha')$ along $C^{k+1}$. We also use $F^*$ to transfer
the homotopy in $\Contf(D_{k+1},\C^{2n+1})$, connecting 
$(G^*\alpha,G^*\beta)$ to $(\alpha',\beta')$, to a homotopy in $\Contf(C^{k+1},X)$ 
which is fixed on $bC^{k+1}$ and connects $(\alpha,\beta)$ to $(F^*\alpha',F^*\beta')$. 

This completes the basic induction step. 
Applying this procedure successively on each $(k+1)$-cell in the given triangulation 
of $M$ yields a desired  almost complex structure $\wt \alpha \in \AC(M_{k+1},X)$. 
In the final step when $k+1=m$ we obtain an element $\alpha_1\in \AC(M,X)$.

Clearly all steps can be carried out with a continuous dependence on a parameter,
and by using cut-off functions on the parameter space we can ensure that the 
homotopy is fixed for the parameter values $p\in Q$. 
This yields the corresponding parametric h-principle.
\end{proof}

%
%

\section{Extending a complex contact structure across a totally real handle}
\label{sec:handle}

Recall that a compact set in a complex manifold $X$ is called a {\em Stein compact}
if it admits a basis of open Stein neighbourhoods in $X$.
The following lemma provides a key induction step in the proof of Theorems
\ref{th:basic}, \ref{th:ifthen}, and \ref{th:parametric}.

%
%

\begin{lemma}\label{lem:handle}
Let $K$ and $S=K\cup M$ be Stein compacts in a complex manifold $X^{2n+1}$, where
$M=\overline {S\setminus K}$ is an embedded totally real submanifold of class $\Cscr^2$.
Let $(\alpha,\beta)\in\Contf(X)$ be a formal contact structure with values in 
a holomorphic line bundle $L$ on $X$. Assume that there is an open neighbourhood 
$U\subset X$ of $K$ such that $\alpha|_U\in\Conth(U)$ and $\beta=d\alpha$
on $\ker\alpha|_U$. Then, there exist a neighbourhood $\Omega_0\subset U$ of $K$,
a Stein neighbourhood $\Omega\subset X$ of $S$,
and a homotopy $(\alpha_t,\beta_t)\in \Contf(X)$ $(t\in[0,1])$ satisfying the
following conditions.
\begin{enumerate}[\rm (i)]
\item $(\alpha_0,\beta_0)=(\alpha,\beta)$ on $\Omega_0$.
\item $\alpha_t|_{\Omega_0}\in\Conth(\Omega_0)$ and 
$\beta_t=d\alpha_t$ on $\ker\alpha_t|_{\Omega_0}$ for all $t\in[0,1]$.
\item $\alpha_t$ approximates $\alpha$ as closely as desired uniformly on $K$ and
uniformly in $t\in [0,1]$.
\item $\alpha_1|_\Omega\in \Conth(\Omega)$ and $\beta_1=d\alpha_1$ on $\ker\alpha_1|_\Omega$.
\end{enumerate}
The analogous result holds for a continuous family $\{(\alpha_p,\beta_p)\}_{p\in P}\subset \Contf(X)$
where $P$  is a compact Hausdorff space; the homotopy may be kept fixed for the parameter values 
in a closed subset $Q\subset P$ such that $\alpha_p\in\Conth(X)$ for all $p\in Q$.
\end{lemma}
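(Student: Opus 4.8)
The plan is to reduce to the totally real situation handled in Section \ref{sec:tr2}, splicing a holomorphic contact form defined near the Stein compact $K$ together with a holomorphic contact form obtained near the totally real handle $M$. First I would choose a smooth strongly plurisubharmonic exhaustion near $S$ so that $K$ is (contained in) a sublevel set and $M$ is attached as a totally real handle in the sense described in the introduction: $M\setminus bM\subset X\setminus K$, the attachment is $J$-orthogonal along $bM$, and (after a small isotopy of $M$ rel endpoints, which does not affect the homotopy class of $(\alpha,\beta)$) the sphere $bM$ is Legendrian for the field of complex tangencies on the strongly pseudoconvex boundary. Under these hypotheses $S=K\cup M$ is a Stein compact by Eliashberg \cite{Eliashberg1990} (see also \cite{ForstnericKozak2003}), so it admits a basis of Stein neighbourhoods; pick one such $\Omega'$ and shrink later.

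The core of the argument is the relative h-principle of Lemma \ref{lem:hprinciple-AC}, applied to the totally real submanifold $M$ but \emph{with prescribed boundary data on $bM\subset K$}. Over a neighbourhood $U$ of $K$ we already have $\alpha|_U\in\Conth(U)$ and $\beta=d\alpha$; in particular $\alpha|_{bM}$ is (the restriction of) a holomorphic contact form, hence lies in $\AC(bM,X)$, and $\beta=d\alpha$ there. So the hypotheses of the boundary-relative version of Lemma \ref{lem:hprinciple-AC} are met, and I obtain a homotopy $(\alpha_t,\beta_t)\in\Contf(M,X)$, fixed on $bM$ (and hence extendable by a cutoff to be fixed on a neighbourhood $\Omega_0\subset U$ of $K$, giving (i) and the first half of (ii)), ending with $\alpha_1\in\AC(M,X)$ and $\beta_1=d\alpha_1=\di\alpha_1$ on $(\ker\alpha_1)|_M$. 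As in the proof of Theorem \ref{th:trparametric}, I then pass to $\beta_t=\beta_t^{2,0}$, $\alpha_t=\alpha_t^{1,0}$ on a neighbourhood of $M$, so that $\alpha_1$ is an almost contact form in the sense of Section \ref{sec:tr}, agreeing with the honest holomorphic contact form near $K$. Next I approximate $\alpha_1$ in the fine $\Cscr^1$ topology on the Stein compact $S=K\cup M$ by a holomorphic section $\wt\alpha$ of $T^*X\otimes L$ over a Stein neighbourhood $\Omega\subset\Omega'$ of $S$, using the Mergelyan-type approximation theorem for sections over Stein compacts of the form $K\cup(\text{totally real})$ (e.g.\ \cite{ManneWoldOvrelid2011,FFW2018}, in the bundle-valued form from \cite[proof of Theorem 2.8.4]{Forstneric2017E}); making the approximation close enough on $K$ in particular, and using that the contact condition is $\Cscr^1$-open, $\wt\alpha\in\Conth(\Omega)$. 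The homotopy $\wt\alpha_s=(1-s)\alpha_1+s\wt\alpha$ with $\wt\beta_s=\di\wt\alpha_s$ on $\ker\wt\alpha_s$ connects $(\alpha_1,\beta_1)$ to $(\wt\alpha,d\wt\alpha)$ through $\Contf(\Omega)$; concatenating with the earlier homotopy, then extending to all of $X$ by a cutoff in the homotopy parameter, yields a homotopy in $\Contf(X)$ satisfying (i)--(iv), with (iii) guaranteed by the uniform closeness of each stage on $K$.

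The main obstacle is the boundary-relative part of the h-principle: Lemma \ref{lem:hprinciple-AC} as stated deforms a formal contact structure to an almost contact one on $M$ keeping fixed a structure already almost contact on $bM$, but here I must keep fixed a structure that is genuinely \emph{holomorphic contact} in a full neighbourhood of $bM$ in $X$, not merely almost contact on $bM$. The subtlety is to run the skeleton-by-skeleton induction of Lemma \ref{lem:hprinciple-AC}'s proof so that the Gromov h-principle invocations (via Lemma \ref{lem:main} and ampleness, Lemma \ref{lem:ample}) on cells meeting $bM$ are carried out rel the germ of $\alpha$ along $K$; since $\alpha$ is holomorphic near $K$, any cell attached there already carries holonomic data, and Lemma \ref{lem:main}'s relative statement (fixed on the closed subset $\Gamma$) applies — so the point is really bookkeeping about which cells of the triangulation of $M$ abut $K$ and arranging the chart $F$ near those cells to restrict correctly to the holomorphic structure on the overlap $U$. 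Everything else — the Stein compact property of $K\cup M$, the Mergelyan approximation, and the parametric/relative embellishments via cutoffs in $P$ over $Q$ — is by now standard and follows the pattern of Theorem \ref{th:trparametric}.
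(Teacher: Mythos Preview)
Your proposal follows the same overall strategy as the paper's proof: apply the relative h-principle of Lemma \ref{lem:hprinciple-AC} along the totally real part $M$ to make the formal structure almost contact there, then Mergelyan-approximate on the Stein compact $S=K\cup M$ by a holomorphic $1$-form, connect by a linear homotopy, and extend to $X$ by a cutoff in the homotopy parameter. This is correct.

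The one place where you work harder than necessary is precisely the ``main obstacle'' you flag. You apply Lemma \ref{lem:hprinciple-AC} to all of $M$ with boundary data on $bM\subset K$, and then worry about upgrading ``fixed on $bM$'' to ``fixed on a full neighbourhood of $K$'', proposing to track which cells of the triangulation abut $K$ and arrange charts accordingly. The paper sidesteps this bookkeeping with a one-line device: choose a smoothly bounded closed domain $M_0\subset M$ with $M_0\cap K=\varnothing$ and $bM_0\subset U$. On the collar $\overline{M\setminus M_0}\subset U$ the form $\alpha$ is already holomorphic contact, so nothing needs to be changed there; one applies Lemma \ref{lem:hprinciple-AC} to $M_0$ (whose boundary $bM_0$ carries genuinely holomorphic data since $bM_0\subset U$), and the resulting homotopy is then fixed on the set $K'=K\cup\overline{M\setminus M_0}\subset U$, which is an honest neighbourhood of $K$ inside $S$. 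This immediately gives (i)--(ii) without any cell-by-cell analysis near $bM$. Your approach would work too, but the paper's shrinking trick is cleaner and is worth knowing.

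A minor point: the preliminary geometric setup you describe (strongly plurisubharmonic exhaustion, $J$-orthogonal attachment, Legendrian isotopy of $bM$) is not needed for this lemma; the hypotheses already give $S=K\cup M$ as a Stein compact, and that is all the proof uses.
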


\vspace{-5mm}

\begin{proof}
Let $U\subset X$ be a relatively compact neighbourhood of $K$ as in the statement of the lemma;
in particular, $\alpha|_U\in \Conth(U)$.
Choose a closed domain $M_0\subset M$ with $\Cscr^2$ boundary such that $M_0\cap K=\varnothing$
and $K':=K\cup \overline{M\setminus M_0}\subset U$. By Lemma \ref{lem:hprinciple-AC} we can deform 
$(\alpha,\beta)$ through a family of formal contact structures $(\alpha_t,\beta_t) \in \Contf(X)$  
such that the deformation is fixed on a neighbourhood of $K'$, 
and at $t=1$ we have that  $\alpha_1|_{M_0}\in\AC(M_0,X)$ and $\beta_1=d\alpha_1$
on $(\ker\alpha_1)|_{M_0}$. Note that $\alpha_1$ is holomorphic on a neighbourhood of $K'$ 
(where it equals $\alpha_0$) and is asymptotically holomorphic along $M$.

By the Mergelyan approximation theorem,  
we can approximate $\alpha_1$ and its $1$-jet along $M$ 
as closely as desired in the $\Cscr^1$ topology on $S=K\cup M$ by an $L$-valued 
holomorphic $1$-form $\wt \alpha_1$ defined on a neighbourhood of $S$.
We refer to \cite[Theorem 20]{FFW2020} for the relevant version of Mergelyan's theorem.
(In the cited source the reader can also find references to the previous works;
see in particular Manne, \O{v}relid and Wold \cite{ManneWoldOvrelid2011}.
The proof of \cite[Theorem 20]{FFW2020} easily adapts to provide jet-approximation;
see Chenoweth \cite[Proposition 7]{Chenoweth2019}.
Although the cited results are stated for functions, they also hold for 
sections of holomorphic vector bundles over Stein domains 
as shown in \cite[proof of Theorem 2.8.4]{Forstneric2017E}.)
If the approximation of $\alpha_1$ by $\wt\alpha_1$ is close enough on $S$, the family
$(1-t)\alpha_1+t\wt\alpha_1$ $(t\in [0,1])$ is a homotopy of 
holomorphic contact forms on a neighbourhood of $K'$, and its restriction to $M_0$ is a homotopy in
the space $AC(M_0,X)$ of almost contact forms on $M_0$. 

By combining the homotopies from these two steps, we get a homotopy $(\alpha_t,\beta_t)$
on a neighbourhood $V\subset X$ of $S=K\cup M$ satisfying the conclusion of the lemma.
Finally, by inserting a smooth cutoff function on $X$ into the parameter of the homotopy,
we can glue the resulting homotopy with $(\alpha_0,\beta_0)=(\alpha,\beta)$ outside a 
Stein neighbourhood $\Omega\subset V$ of $S$.

It is clear that the same proof applies in the parametric situation. 
The main ingredients are the parametric version of Lemma \ref{lem:main} and a 
parametric version of Mergelyan's theorem from \cite[Theorem 20]{FFW2020}.
The latter is easily obtained from the basic (nonparametric) case 
by applying a continuous partition of unity on the parameter space.
(Compare with the proof of the parametric Oka-Weil theorem
in \cite[Theorem 2.8.4]{Forstneric2017E}.)
\end{proof}

%
%

\section{Proofs of the main results} \label{sec:proofs}

\begin{proof}[Proof of Theorem \ref{th:basic}]
We follow the scheme explained in the paper \cite{ForstnericSlapar2007MZ} by
Slapar and the author; see in particular the proof of Theorem 1.2 in the cited source. 
Complete expositions of this construction can also be found in 
\cite[Chap.\ 8]{CieliebakEliashberg2012} and \cite[Secs.\ 10.9--10.11]{Forstneric2017E}.

Choose a smooth strongly plurisubharmonic Morse exhaustion function $\rho\colon X\to \R_+$. 
Let $p_0,p_1,p_2,\ldots\in X$ be the critical points of $\rho$ with
$\rho(p_0)<\rho(p_1)<\cdots$; thus $p_0$ is a minimum of $\rho$. 
Choose numbers $c_j\in\R$ satisfying 
\[
	\rho(p_0) < c_0 < \rho(p_1)< c_1 < \rho(p_2) < c_2 <\ldots.
\]
For each $j=0,1,\ldots$ we set $X_j=\{x\in X\colon \rho(x) < c_j\}$.
Note that $\rho$ has a unique critical point $p_j$ in $X_j \setminus X_{j-1}$ for each $j=1,2,\ldots$.
(If $\rho$ has only finitely many critical points $p_0,\ldots, p_m$, the process described in the sequel
will stop after $m+1$ steps and the domain $X_m=\{\rho <c_m\}$ is diffeotopic to $X$. This is always
the case if $X$ is an affine algebraic manifold.)
By choosing the number $c_0$ close enough to $\rho(p_0)$, we can arrange by a homotopy 
in $\Contf(X)$ that $\alpha_0$ is a holomorphic contact form on a neighbourhood of the set 
$\overline X_0 = \{\rho \le c_0\}$ and $\beta_0=d\alpha_0$ on $\ker \alpha_0$ holds. 
 
Fix a number $\epsilon>0$. We shall inductively construct the following objects:
\begin{enumerate}[\rm (a)]
\item an increasing sequence of relatively compact, 
smoothly bounded, strongly pseudoconvex domains 
$W_0\subset W_1\subset W_2\subset \cdots$ in $X$, with $W_0=X_0$,
\item a sequence of formal contact structures $(\alpha_j,\beta_j)\in \Contf(X)$ $(j=1,2,\ldots)$
with values in the given holomorphic line bundle $L\to X$, and 
\item a sequence of smooth diffeomorphisms $h_j\colon X\to X$ $(j=0,1,\ldots)$ with $h_0=\Id_X$,
\end{enumerate}
satisfying the following conditions for all $j=1,2,\ldots$.
\begin{enumerate}[\rm (i)]
\item  The set $\overline W_{j-1}$ is $\Oscr(W_j)$-convex.
\item  There is an open neighbourhood $U_j\subset X$ of  $\overline W_j$
such that $\alpha_j|_{U_j}\in \Conth(U_j)$  and $d\alpha_j=\beta_j$ on $\ker\alpha_j|_{U_j}$. 
(This already holds for $j=0$.)
\item There is a homotopy $(\alpha_{j,t},\beta_{j,t})\in \Contf(X)$ 
$(t\in [0,1])$ such that $(\alpha_{j,0},\beta_{j,0})=(\alpha_{j-1},\beta_{j-1})$,
$(\alpha_{j,1},\beta_{j,1})=(\alpha_j,\beta_j)$, and for every $t\in [0,1]$,
$\alpha_{j,t}$ is a holomorphic contact form in a
neighbourhood of $\overline W_{j-1}$ with $d\alpha_{j,t}=\beta_{j,t}$ on $\ker\alpha_{j,t}$ there. 
\item  $\sup_{x\in W_{j-1}} |\alpha_{j,t}(x) - \alpha_{j-1}(x)| < \epsilon 2^{-j}$, 
where the difference of forms is measured with respect to a fixed pair of
hermitian metrics on $T^*X$ and  $L$.
\item  $h_j(X_j)=W_j$ and $h_j=\Id_X$ on $X\setminus X_{j+1}$ (hence,
$h_j(X_{j+1})=X_{j+1}$).
\item $h_j=g_j\circ h_{j-1}$ where $g_j\colon X\to X$ is a diffeomorphism which 
maps $X_j$ onto $W_j$ and is diffeotopic to $\Id_X$ by a diffeotopy 
that equals $\Id_X$ on $\overline W_{j-1}\cup (X\setminus X_{j+1})$.
\end{enumerate}

Granted such sequences, the domain $\Omega=\bigcup_{j} W_j \subset X$
is Stein in view of condition (i), the limit $\wt \alpha = \lim_{j\to \infty} \alpha_j$ exists 
and is a holomorphic contact form  on $\Omega$ in view of (ii) and (iv), 
and the individual homotopies in (iii) can be put together into a homotopy in 
$\Contf(\Omega)$ from $(\alpha_0,\beta_0)$ to $(\wt\alpha,d\wt\alpha)$
(see conditions (iii) and (iv)). Furthermore, conditions (v) and (vi) ensure that 
the sequence $h_j$ converges to a diffeomorphism $h=\lim_{j\to\infty} h_j \colon X\to \Omega$
satisfying the conclusion of Theorem \ref{th:basic}. 
With a bit more care in the choice of $W_j$ at each step, we can ensure that $\Omega$ is
smoothly bounded and strongly pseudoconvex. In general we cannot 
choose $\Omega$ to be relatively compact, unless $X$ 
admits an exhaustion function $\rho\colon X\to\R$ with at most 
finitely many critical points. In the latter case, the above process clearly terminates
in finitely many steps and yields a holomorphic contact form on a
bounded strongly pseudoconvex domain $\Omega\Subset X$ diffeotopic to $X$. 

We now describe the induction step. To the strongly pseudoconvex domain $W_{j-1}$ 
we attach the disc $M_j := h_{j-1}(D_j)$, where $D_j\subset X_j \setminus X_{j-1}$ 
(with $bD_j\subset bX_{j-1}$) is the unstable disc at the critical point 
$p_j \in X_j \setminus X_{j-1}$.
By \cite[Lemma 3.1]{ForstnericSlapar2007MZ} we can isotopically deform
$M_j$ to a smooth totally real disc in $X\setminus W_{j-1}$ 
attached to $bW_{j-1}$ along the Legendrian sphere $bM_j\subset bW_{j-1}$. 
Lemma \ref{lem:handle} provides the next element $(\alpha_j,\beta_j)\in\Contf(X)$, 
and a homotopy  $(\alpha_{j,t},\beta_{j,t})\in \Contf(X)$ $(t\in[0,1])$ 
satisfying condition (iii), such that $\alpha_j$ is a holomorphic contact form on a 
strongly pseudoconvex handlebody $W_j\supset \overline W_{j-1}\cup M_j$ 
and $\beta_j=d\alpha_j$ there. The next diffeomorphism $h_j=g_j\circ h_{j-1}$
satisfying conditions (v) and (vi) is then furnished by Morse theory. This concludes the proof.

Conditions (v) and (vi) show that the domain $\Omega$ is diffeotopic to $X$.
By a more precise argument in the induction step one can also ensure
the existence a diffeotopy $h_t:X\to h_t(X)\subset X$ from $h_0=\Id_X$ 
to a diffeomorphism $h_1=h:X\to \Omega$ through a family of Stein domains $h_t(X)\subset X$; 
see \cite[Theorem 8.43 and Remark 8.44]{CieliebakEliashberg2012}. This depends on the
stronger technical result given by \cite[Theorem 8.5, p.\ 157]{CieliebakEliashberg2012}.
\end{proof}

%
%
%
%

The same proof gives the following parametric extension of Theorem \ref{th:basic}.

\begin{theorem}\label{th:parametric}
Assume that $X$ is a Stein manifold of dimension $2n+1\ge 3$ and $Q\subset P$ are compact Hausdorff spaces.
Let $(\alpha_p,\beta_p)\in \Contf(X)$ be a continuous family of formal contact structures such that for every 
$p\in Q$, $(\alpha_p,\beta_p=d\alpha_p)$ is a holomorphic contact structure. 
Then there are a Stein domain $\Omega\subset X$ diffeotopic to $X$
and a homotopy $(\alpha_{p,t},\beta_{p,t})\in  \Contf(X)$ $(p\in P,\ t\in [0,1])$ 
which is fixed for all $p\in Q$ such that 
$(\alpha_{p,1},\beta_{p,1}=(d\alpha_{p,1})_{\ker {\alpha_{p,1}}})$ 
is a holomorphic contact structure on $\Omega$ for every $p\in P$.
\end{theorem}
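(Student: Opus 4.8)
The plan is to run the proof of Theorem \ref{th:basic} verbatim, carrying the parameter $p\in P$ through every stage and keeping the construction frozen over $Q$. The decisive point is that the Morse-theoretic skeleton of that argument --- a strongly plurisubharmonic Morse exhaustion $\rho\colon X\to\R$, its critical points $p_0,p_1,p_2,\ldots$, the sublevel sets $X_j=\{\rho<c_j\}$, the unstable discs $D_j$, the increasing chain of strongly pseudoconvex handlebodies $W_0\subset W_1\subset\cdots$, and the diffeomorphisms $h_j\colon X\to X$ --- involves no contact data whatsoever, so it is built once and serves every parameter at the same time. Thus the Stein domain $\Omega=\bigcup_j W_j$ and the diffeomorphism $h=\lim_j h_j\colon X\stackrel{\cong}{\lra}\Omega$ (and, with the usual extra care, the diffeotopy through Stein domains) are $p$-independent; only the forms and the homotopies between them will depend on $p$.

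First I would dispose of the base case at the minimum $p_0$ of $\rho$. A single point is a totally real submanifold of class $\Cscr^2$, so Theorem \ref{th:trparametric} applied with $M=\{p_0\}$ yields a homotopy $(\alpha_{p,t},\beta_{p,t})\in\Contf(X)$, stationary for $p\in Q$ (where $\alpha_p$ is already holomorphic contact on all of $X$), whose endpoint is a holomorphic contact form on a neighbourhood of $p_0$ with $\beta_{p,1}=d\alpha_{p,1}$ there; choosing $c_0$ close enough to $\rho(p_0)$ we arrange that this neighbourhood contains $\overline X_0=W_0$. The inductive step is then a single application of the parametric part of Lemma \ref{lem:handle}, taken with $K$ a neighbourhood of $\overline W_{j-1}$ and $M=M_j:=h_{j-1}(D_j)$ the unstable disc, which is first isotopically straightened to a totally real handle attached to $bW_{j-1}$ along the Legendrian sphere $bM_j$ exactly as in \cite[Lemma 3.1]{ForstnericSlapar2007MZ} (again a step not involving $p$). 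This produces the next family $(\alpha_{p,j},\beta_{p,j})\in\Contf(X)$, holomorphic contact on a strongly pseudoconvex handlebody $W_j\supset\overline W_{j-1}\cup M_j$, together with a homotopy as in condition (iii) of the proof of Theorem \ref{th:basic}, fixed for $p\in Q$; since the hypothesis $\alpha_p\in\Conth(X)$ for $p\in Q$ is preserved at each stage, the parametric hypothesis of Lemma \ref{lem:handle} stays satisfied throughout. The diffeomorphism $h_j=g_j\circ h_{j-1}$ with the required properties comes from Morse theory as before.

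Finally I would pass to the limit exactly as in the proof of Theorem \ref{th:basic}: the uniform estimates $\sup_{W_{j-1}}|\alpha_{p,j,t}-\alpha_{p,j-1}|<\epsilon 2^{-j}$ make $\wt\alpha_p:=\lim_j\alpha_{p,j}$ a holomorphic contact form on $\Omega$ for each $p$, and the per-stage homotopies --- each in $\Contf(X)$, stationary outside a Stein neighbourhood of $\overline W_{j-1}\cup M_j$ contained in $\Omega$, and stationary over $Q$ --- concatenate (after a reparametrisation fitting infinitely many into $[0,1]$, with continuity at $t=1$ furnished by the same estimates) into a homotopy $(\alpha_{p,t},\beta_{p,t})\in\Contf(X)$ from $(\alpha_p,\beta_p)$ to $(\wt\alpha_p,d\wt\alpha_p)$ that is fixed for all $p\in Q$; the refinement of \cite[Theorem 8.43 and Remark 8.44]{CieliebakEliashberg2012} then upgrades $h$ to a diffeotopy through Stein domains. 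I do not expect an essential new obstacle, since every analytic input --- Lemma \ref{lem:main}, Lemma \ref{lem:handle}, and the Mergelyan theorem of \cite{FFW2018} --- has already been established in parametric form; the one point genuinely demanding attention is the bookkeeping that makes $\Omega$, the handlebodies $W_j$, and the diffeomorphisms $h_j$ simultaneously valid for all parameters, which rests on their independence of the contact data together with the compactness of $P$ (so that the Mergelyan approximations internal to Lemma \ref{lem:handle}, and hence the choice of each $W_j$, can be made uniform in $p$).
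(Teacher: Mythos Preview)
Your proposal is correct and follows essentially the same approach as the paper: run the proof of Theorem~\ref{th:basic} with a parameter, using the parametric clause of Lemma~\ref{lem:handle} at each inductive step, and observe that the Morse--handlebody skeleton $(W_j,h_j)$ is independent of $p$. The paper's own argument is a one-paragraph remark to exactly this effect; your write-up is in fact more detailed (spelling out the base case via Theorem~\ref{th:trparametric}, the concatenation of homotopies, and the role of compactness of $P$ in choosing each $W_j$ uniformly), but the substance is the same.
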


To see this, we follow the proof of Theorem \ref{th:basic} and note that, 
in the inductive step, the domain $W_j$ (a smoothly bounded 
tubular Stein neighbourhood of $\overline W_{j-1}  \cup M_j$) 
can be chosen such that Lemma \ref{lem:handle} provides the next family 
$\{(\alpha_{p,j},\beta_{p,j})\}_{p\in P}\in\Contf(X)$ satisfying condition (iii), 
where $\alpha_{p,j}$ is a holomorphic contact form on $W_j$ and 
$\beta_{p,j}=d\alpha_{p,j}$ on $W_j$ for all $p\in P$. 

We recall the following definition \cite[Definition 5.7.1]{Forstneric2017E}.

%
%
\begin{definition}\label{Cartan-pair}
A pair $(A,B)$ of compact subsets in a complex manifold $X$
is a {\em Cartan pair} if it satisfies the following two conditions:   
\begin{enumerate}[\rm(i)]  
\item $A$, $B$, $C=A\cap B$, and $D=A\cup B$ are Stein compacts
(i.e., they admit a basis of open Stein neighbourhoods in $X$), and
\item $A,B$ are {\em separated} in the sense that
$\overline{A\setminus B}\cap \overline{B\setminus A} =\varnothing$.
\end{enumerate}
\end{definition}

A particularly simple kind of a Cartan pair is a {\em convex bump}; see
\cite[Definition 5.10.2]{Forstneric2017E}. This means that, in addition to the conditions 
in Definition \ref{Cartan-pair}, there is a coordinate neighbourhood 
$(U,z)$ of $B$ in $X$, with a biholomorphic map $z:U\to\wt U\subset\C^n$
$(n=\dim X)$, such that $z(B)$ and $z(C)=z(A\cap B)$ are compact convex sets in $\C^n$. 

In the proof of Theorem \ref{th:ifthen} we shall need the following 
gluing lemma for holomorphic contact forms on Cartan pairs.
(The analogous gluing lemma for nonsingular holomorphic foliations given by exact 
holomorphic $1$-forms is \cite[Theorem 4.1]{Forstneric2003AM}.)

%
%
\begin{lemma}[Gluing lemma for holomorphic contact forms] \label{lem:gluingcontact}
Let $(A,B)$ be a Cartan pair in a complex manifold $X^{2n+1}$.
Assume that $\alpha,\beta$ are holomorphic contact forms on open neighbourhoods
of $A$ and $B$, respectively. If $\beta$ is sufficiently uniformly close to $\alpha$ 
on a fixed neighbourhood of $C=A\cap B$, then there exists a holomorphic 
contact form $\wt\alpha$ on a neighbourhood of $A\cup B$ 
which approximates $\alpha$ uniformly on $A$ and
approximates $\beta$ uniformly on $B$.
\end{lemma}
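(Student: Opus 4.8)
The statement is a gluing lemma for a \emph{fully nonlinear} open first‑order PDE condition (the contact condition $\gamma\wedge(d\gamma)^n\ne 0$) on a Cartan pair. The natural strategy is to reduce the problem to the linear gluing of holomorphic $1$‑forms and then invoke the openness of the contact condition. The plan is to proceed in three steps.

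First I would set up the \emph{linear splitting}. Since $(A,B)$ is a Cartan pair, the difference $c=\beta-\alpha$ is a holomorphic $1$‑form defined on a fixed neighbourhood of $C=A\cap B$, small in the supremum norm there by hypothesis. By the Cartan splitting for holomorphic sections of the cotangent bundle over Cartan pairs (this is the standard bounded splitting $c=b-a$ with $a$ holomorphic near $A$, $b$ holomorphic near $B$, and $\|a\|_A,\|b\|_B\le K\|c\|_C$ for a constant $K$ depending only on the pair; see \cite[Sect.\ 5.7--5.8]{Forstneric2017E}), we write $c=b-a$ accordingly. Then $\wt\alpha:=\alpha+a$ on a neighbourhood of $A$ and $\beta-b$ on a neighbourhood of $B$ agree on a neighbourhood of $C$ (both equal $\alpha+a=\beta-b$ there), hence patch to a single holomorphic $1$‑form $\wt\alpha$ on a neighbourhood of $A\cup B$. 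By construction $\|\wt\alpha-\alpha\|_A=\|a\|_A\le K\|c\|_C$ and $\|\wt\alpha-\beta\|_B=\|b\|_B\le K\|c\|_C$, so $\wt\alpha$ is as uniformly close to $\alpha$ on $A$ and to $\beta$ on $B$ as we wish, provided $\beta$ was chosen close enough to $\alpha$ on $C$.

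Second I would check that $\wt\alpha$ is actually a \emph{contact} form on a neighbourhood of $A\cup B$, which is where the choice of neighbourhoods must be made carefully. The point $\gamma\mapsto \gamma\wedge(d\gamma)^n$ is a continuous (indeed polynomial in the $1$‑jet) map, and the contact condition $\ne 0$ is open in the fine $\Cscr^1$ topology on any fixed compact set. Fix at the outset compact neighbourhoods $A'\supset A$, $B'\supset B$, $C'\supset C$ of the respective sets on which $\alpha$, $\beta$, $\alpha$ (respectively) are defined and contact, with $A'\cup B'$ a Cartan pair and $C'=A'\cap B'$; shrink $A',B'$ slightly if needed so that they are $\Oscr$‑convex in a common Stein neighbourhood, which is exactly what makes the bounded splitting available. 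Then the splitting $c=b-a$ is holomorphic on a full neighbourhood of $A'$ (resp.\ $B'$), and by Cauchy estimates a bound on $\|a\|$ in sup‑norm over a slightly larger set controls $\|a\|_{\Cscr^1}$ over $A$; thus for $\|c\|_{C'}$ small, $\wt\alpha$ is $\Cscr^1$‑close to $\alpha$ over $A$ and to $\beta$ over $B$, hence remains contact there by openness. This gives a holomorphic contact form on a neighbourhood of $A\cup B$ with the desired approximation properties.

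Third, the \textbf{main obstacle}: making the two local definitions of $\wt\alpha$ genuinely coincide on an \emph{open} neighbourhood of $C$, not merely on $C$ itself, so that they define a single holomorphic object. This is automatic in the linear approach above — $\alpha+a$ and $\beta-b$ are literally the same holomorphic $1$‑form wherever both are defined, since $a$ and $b$ are globally defined holomorphic forms near $A'$ and $B'$ and $a+\alpha=\beta-b$ is an identity of holomorphic forms on the overlap. So the real work is entirely in (i) invoking the correct bounded linear splitting lemma for holomorphic sections of $T^*X$ over a Cartan pair with the constant independent of the specific forms, and (ii) bookkeeping the nested neighbourhoods $C\subset C'$, $A\subset A'$, $B\subset B'$ so that a sup‑norm smallness hypothesis on $C'$ upgrades, via Cauchy estimates, to $\Cscr^1$‑smallness on $A$ and $B$ where the open contact condition is then applied. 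I expect no genuine analytic difficulty beyond citing \cite[Sect.\ 5.8]{Forstneric2017E} for the splitting and the elementary openness of $\{\gamma\wedge(d\gamma)^n\ne 0\}$; the lemma is essentially a packaging of these two facts, in exact parallel with the foliation gluing lemma \cite[Theorem 4.1]{Forstneric2003AM} cited in the statement.
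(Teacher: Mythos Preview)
Your approach is correct but takes a genuinely different route from the paper. You reduce to the \emph{linear} additive Cousin splitting $c=\beta-\alpha=b-a$ for holomorphic $1$-forms over the Cartan pair, patch $\wt\alpha=\alpha+a=\beta-b$, and then invoke openness of the contact condition in $\Cscr^1$ together with Cauchy estimates. The paper instead argues \emph{geometrically}: it applies Gray's stability theorem to the path $\alpha_t=(1-t)\alpha+t\beta$ on a neighbourhood of $C$ to obtain a biholomorphism $\phi_1$ close to the identity with $\phi_1^*\beta=\lambda_1\alpha$, then uses the \emph{nonlinear} splitting lemma for biholomorphisms \cite[Theorem 9.7.1]{Forstneric2017E} to write $\phi_1\circ\phi_A=\phi_B$, so that $\phi_A^*\alpha$ and $\phi_B^*\beta$ share a kernel on the overlap; a multiplicative Cousin problem then yields a single defining form $\wt\alpha$. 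Your argument is more elementary---it avoids Gray stability, the nonlinear splitting, and the Cousin-II step entirely, relying only on a bounded linear $\dibar$-solution and the fact that $\gamma\mapsto\gamma\wedge(d\gamma)^n$ is $\Cscr^1$-continuous. The paper's route, in exchange, yields the extra information that the glued contact structure is genuinely \emph{contactomorphic} (via $\phi_A,\phi_B$) to the original ones on $A$ and $B$, which could matter in more refined applications but is not needed for the lemma as stated. One minor remark: the bounded additive splitting you cite is not quite what \cite[Sect.\ 5.8]{Forstneric2017E} states (that section treats the multiplicative Cartan lemma), but the additive version with sup-norm bounds is standard and indeed underlies the multiplicative one.
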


\begin{proof}
Let $\alpha$ and $\beta$ be holomorphic contact forms on open neighbourhoods
$A'\supset A$ and $B'\supset B$, respectively. Set $C'=A'\cap B'$ and define 
\[
	\alpha_t=(1-t)\alpha + t\beta\ \ \text{in $C'$\ \ for $t\in[0,1]$}.
\] 
Assuming that $\beta$ is sufficiently uniformly close to $\alpha$ on $C'$,
$\alpha_t$ is a contact form on a smaller neighbourhood of $C=A\cap B$ for every $t\in[0,1]$.
By the proof of Gray's stability theorem (see \cite{Gray1959AM} 
or \cite[p. 60]{Geiges2008} for the smooth case) we find 
\begin{enumerate}
\item a neighbourhood $C''\subset C'$ of $C$,
\item an isotopy of biholomorphic maps $\phi_t\colon C''\to \phi_t(C'') \subset C'$ $(t\in[0,1])$ 
with $\phi_0=\Id$ and $\phi_t$ close to the identity for all $t\in[0,1]$, and
\item a family of nowhere vanishing holomorphic functions $\lambda_t\colon C''\to\C^*$ 
close to $1$, with $\lambda_0=1$,
\end{enumerate} 
satisfying $\phi_t^*\alpha_t=\lambda_t \alpha$ on $C''$ for every $t\in[0,1]$.
In particular, we have 
\[
	\text{$\phi_1^*\beta=\lambda_1 \alpha$\ \ on $C''$.}
\]
Assuming that $\phi_1$ is sufficiently uniformly close to the identity on $C''$
(which holds if $\beta$ is close enough to $\alpha$ on $C'$), 
we can apply the splitting lemma \cite[Theorem 9.7.1]{Forstneric2017E} to obtain 
\[
	\phi_1\circ \phi_A = \phi_B
\]
on a neighbourhood of $C$, where $\phi_A$ and $\phi_B$ are biholomorphic maps 
close to the identity on open neighbourhoods of $A$ and $B$, respectively. 
On a neighbourhood of $C$ we then have 
\[
	(\lambda_1\circ \phi_A) \,\cdotp \phi_A^* \alpha
	= \phi_A^* (\lambda_1\alpha) 
	= \phi_A^* (\phi_1^* \beta) = (\phi_1 \circ \phi_A)^* \beta = 
	\phi^*_B \beta.
\] 
This shows that the holomorphic contact forms $\phi_A^* \alpha,\phi^*_B \beta$, defined on
neighbourhoods of $A$ and $B$, respectively, have the same kernel on a neighbourhood of $C$,
and hence they define a holomorphic contact structure $\tilde \xi$
on a neighbourhood of $A\cup B$. Assuming as we may that the function $\lambda_1\circ \phi_A$
is sufficiently close to $1$ on a neighbourhood of $C$, we can solve 
a multiplicative Cousin problem on the Cartan pair $(A,B)$ and correct the above $1$-forms 
by the respective factors to obtain a holomorphic $1$-form $\wt \alpha$ on a
neighbourhood of $A\cup B$, with $\ker\wt\alpha=\tilde\xi$,  
which approximates $\alpha$ and $\beta$ on $A$ and $B$, respectively.
\end{proof}

%
%
%
\begin{proof}[Proof of Theorem \ref{th:ifthen}]
We follow the inductive scheme used in Oka theory;
see for instance \cite[the proof of Theorem 5.4.4]{Forstneric2017E}.

We use the notation established in the proof of Theorem \ref{th:basic}. 
The only difference from that proof is that we can now extend a holomorphic contact form 
(by approximation) from a neighbourhood of the sublevel set $\overline X_{j-1}=\{\rho\le c_{j-1}\}$
to a neighbourhood of $\overline X_j=\{\rho\le c_j\}$, provided it extends as a formal
contact structure. 

The first step, namely the extension to a Stein handlebody $W_{j-1}$
around $\overline X_{j-1}\cup M_{j}$ (where $M_j$ is a totally real disc
which provides the change of topology at the critical point $p_j\in X_j\setminus X_{j-1}$)
is furnished by the proof of Theorem \ref{th:basic}. We may arrange the process so that 
$X_j$ is a noncritical strongly pseudoconvex extension of $W_{j-1}$
(see \cite[Sect.\ 5.10]{Forstneric2017E}). This implies that we can obtain $X_j$
from $W_{j-1}$ by attaching finitely many convex bumps
(see \cite[Lemma 5.10.3]{Forstneric2017E} for the details).
We now successively extend the contact form (by approximation) across each bump. 
At every step of this process we have a Cartan pair $(A,B)$, 
where $B$ is a convex bump attached to a compact 
strongly pseudoconvex domain $A$ along the set $C=A\cap B$.
(The sets $C\subset B$ are convex in some holomorphic coordinates on a neighbourhood of $B$ in $X$.) 
We also have a holomorphic contact form $\alpha$ on a neighbourhood of $A$. 
Assuming that Problem \ref{pr:Runge} has an affirmative answer, 
we can approximate $\alpha$ uniformly on a neighbourhood of $C$
by a holomorphic contact form $\beta$ on a neighbourhood of $B$.
If the approximation is close enough, Lemma \ref{lem:gluingcontact}
furnishes a holomorphic contact form $\wt\alpha$ on neighbourhood
of $A\cup B$ which approximates $\alpha$ uniformly on $A$.
In finitely many steps of this kind we approximate  the given holomorphic 
contact form on $\overline W_{j-1}$ by a holomorphic contact form on a neighbourhood of $\overline X_j$. 
Hence, this process converges to a holomorphic contact form on all of $X$.
The same holds in the parametric case if the parametric
version of Problem \ref{pr:Runge} has an affirmative answer.
\end{proof}

\subsection*{Acknowledgements}
The author is supported by the research program P1-0291 and grants J1-7256 and
J1-9104 from ARRS, Republic of Slovenia. He wishes to thank Yakov Eliashberg,
Finnur L\'arusson, Marko Slapar, and Jaka Smrekar for helpful discussions.




\vspace*{5mm}
\noindent Franc Forstneri\v c

\noindent Faculty of Mathematics and Physics, University of Ljubljana, Jadranska 19, SI--1000 Ljubljana, Slovenia

\noindent Institute of Mathematics, Physics and Mechanics, Jadranska 19, SI--1000 Ljubljana, Slovenia

\noindent e-mail: {\tt franc.forstneric@fmf.uni-lj.si}

\end{document}